\documentclass[preprint,12pt]{elsarticle}
\newtheorem{theorem}{Theorem}[section]
\newtheorem{Cor}{Corollary}[theorem]
\newtheorem{Prop}{Proposition}[section]
\newtheorem{Def}{Definition}[section]
\newtheorem{Lem}{Lemma}[section]
\newtheorem{Exa}{Example}[section]
\newtheorem{Rem}{Remark}[section]
\newtheorem{proof}{Proof}





\usepackage{amssymb}
\usepackage[all]{xy}


\usepackage[nodots,nocompress]{numcompress}




\biboptions{super}


\begin{document}

\begin{frontmatter}



\title{Combinatorial aspects of dynamical Yang-Baxter maps and dynamical braces}


\author{Diogo Kendy Matsumoto$^{a}$ }

\address{$^{a}$Department of Mathematics, Fundamental Science and Engineering, Waseda University, 3-4-1 Okubo, Shinjuku-ku, Tokyo 169-8555, Japan}
\ead{diogo-swm@akane.waseda.jp}

\begin{abstract}
In this article we propose an algebraic system, which is an abelian group $(A,+)$ with a family of non-associative and non-(left)distributive multiplications $\{\cdot_{\lambda}\}_{\lambda\in H}$. We call this algebraic system dynamical brace. 
The dynamical brace corresponds to a certain dynamical Yang-Baxter map (which is left nondegenerate and satisfy the unitary condition). 
Combinatorial aspects of the dynamical brace give us a correspondence between the dynamical brace and a certain family of subsets of $A\rtimes Aut(A)$. 
From this viewpoint we give an interpretation and examples of the dynamical brace. 


\end{abstract}

\begin{keyword}
Dynamical Yang-Baxter maps, Braces, Right quasigroups

\end{keyword}

\end{frontmatter}


\section{Introduction}
 A dynamical Yang-Baxter map (DYB map) is a set-theoretical solution of a dynamical Yang-Baxter equation. The DYB map is given as a generalization of the Yang-Baxter map (YB map) by Y.Shibukawa[1]. 
The YB map is a set-theoretical solution of the Yang-Baxter equation, that plays an important role in many areas, and has closely relations with bijective 1-cocycles[2], semigroups of I-type[3], and many other things[4,5,6].  

Let $X,H$ be non-empty sets and $\phi$ a map from $H\times X$ to $H$. Then the DYB map associated with $X,H,\phi$ is a map $R(\lambda):X\times X\rightarrow X\times X$, that satisfies the following relation for all $\lambda \in H$.
\begin{equation}
   R_{23}(\lambda)R_{13}(\phi (\lambda ,X^{(2)}))R_{12}(\lambda)
                           =R_{12}(\phi (\lambda ,X^{(3)}))R_{13}(\lambda)R_{23}(\phi (\lambda ,X^{(1)}))                            
\end{equation}(See Definition 2.1).
In [7] Y.Shibukawa describes DYB maps with the invariance condition when $X=H$ is a left quasigroup using ternary system.

 Our purpose is to obtain solutions of the DYB map (in specially when $X\not= H$ ), and is to show structures of the DYB map (moreover give a meaning of DYB maps). 
 
For this purpose, in this article firstly we propose an algebraic system. We call this algebraic system dynamical brace(d-brace). D-braces correspond to a certain DYB maps, which is right nondegenerate and satisfies the unitary condition. The definition of d-brace is as follows.
 
Let $H$ be a non-empty set, $(A,+)$ an abelian group with the family of multiplications $\{ \cdot_{\lambda}:A\times A\rightarrow A \}_{\lambda\in H}$ and $\phi$ a map from $H\times A$ to $H$. Then $(A,H,\phi;+,\{ \cdot_{\lambda} \}_{\lambda \in H})$ is a d-brace if the following conditions are satisfied for all $(\lambda,a,b,c)\in H\times A\times A\times A$. 
\begin{enumerate}
  \item $(a+b)\cdot_{\lambda} c=a\cdot_{\lambda} c +b\cdot_{\lambda} c$ \quad (Right distributive raw),
  \item $a\cdot_{\lambda} (b\cdot_{\lambda} c+b+c)= (a\cdot_{\phi(\lambda,c)} b)\cdot_{\lambda} c+a\cdot_{\phi(\lambda,c)} b+a\cdot_{\lambda} c$,
  \item The map $\gamma_{\lambda}(b) :a \mapsto a\cdot_{\lambda} b+a $ is bijective.
\end{enumerate} 
(See Definition 3.2). The d-brace is a generalization of the brace, that proposed by W.Rump to construct YB map in [6].

Secondly, we state a combinatorial aspect of d-braces and DYB maps . 
We express the d-brace $(A,H,\phi;+,\{\cdot_{\lambda}\}_{\lambda\in H} )$ as a certain family of subsets of $A\rtimes Aut(A)$, and we construct a directed graph associated with this family of subsets. 
From this we obtain a good viewpoint of d-braces, and many examples. 
(This graph is a kind of incident geometry).

The organization of the article is as follows.
 Section 2 we give the definition of DYB maps and basic notions.
In section 3, we state the relation between YB maps and braces that proved by W.Rump[6]. Next we introduce the d-brace, its properties, and prove that d-braces correspond to some DYB maps. Moreover we give the relation between d-braces and braces in special case.
Section 4 and 5, we prove that d-brace structures over an abelian group $(A,+)$ corresponds to a certain family of subsets of $A\rtimes Aut(A)$, and construct directed graphs of d-braces.
From this graphs we obtain many informations of d-brace. Lastly we give several examples of d-braces.

\section{Dynamical Yang-Baxter maps}
In this section we introduce basic notions and results of DYB maps.
Let $X,H$ be non-empty sets and $\phi$ a map from $H\times X$ to $H$. We call elements of $H$ dynamical parameters.
\begin{Def}
{\rm A map $R(\lambda):X\times X \rightarrow X\times X(\lambda \in H)$is a dynamical Yang-Baxter map(DYB map) associated with $X,H,\phi$ if
$R(\lambda)$ satisfies the following equation on $X\times X\times X$ for all $\lambda \in H$.
\begin{equation}
   R_{23}(\lambda)R_{13}(\phi (\lambda ,X^{(2)}))R_{12}(\lambda)
                           =R_{12}(\phi (\lambda ,X^{(3)}))R_{13}(\lambda)R_{23}(\phi (\lambda ,X^{(1)}))
\end{equation}

here $R_{12}(\lambda),R_{12}(\phi (\lambda ,X^{(3)})) ,\cdots$ are maps from $X\times X\times X$ to $X\times X\times X$ defined by 
\[
  R_{12}(\lambda)(a,b,c)=(R(\lambda)(a,b),c) ,
\]
\[
  R_{12}(\phi (\lambda ,X^{(3)}))(a,b,c)=(R(\phi (\lambda ,c))(a,b),c)  \quad (a,b,c\in X).
\]}
\end{Def}
As a special case of DYB maps, we can define Yang-Baxter maps.
\begin{Def}
{\rm A map $R:X\times X \rightarrow X\times X$ is a Yang-Baxter map(YB map) if $R$ satisfies the following equation on $X\times X\times X$
\begin{equation}
 R_{23}R_{13}R_{12} = R_{12}R_{13}R_{23}
\end{equation}
here $R_{ij}$ are defined similarly in the definition above. }
\end{Def}
As can be seen from the definitions above, a YB map is just a DYB map which is independent of the dynamical parameter.

We represent a map $R(\lambda): X\times X \rightarrow X\times X$ $(\lambda \in H)$ by
\begin{equation}
 R(\lambda)(a,b)=(\mathfrak{R}_{b}^{\lambda}(a), \mathfrak{L}_{a}^{\lambda}(b)), \quad (\lambda,a,b) \in H\times X\times X
\end{equation}
For $(a,\lambda) \in X\times H$, we define maps $\mathfrak{L}^{\lambda}_{a}:X\rightarrow X, \mathfrak{R}^{\lambda}_{a}:X\rightarrow X$ by
\begin{equation}
 \mathfrak{L}^{\lambda}_{a}:b\mapsto \mathfrak{L}^{\lambda}_{a}(b) , \mathfrak{R}^{\lambda}_{a}:b\mapsto \mathfrak{R}^{\lambda}_{a}(b).
\end{equation}
For $\lambda \in H$, we set $\mathfrak{L}^{\lambda}:X\times X \rightarrow X, \mathfrak{R}^{\lambda}:X\times X \rightarrow X$ by
\begin{equation}
 \mathfrak{L}^{\lambda}:(a,b)\mapsto \mathfrak{L}^{\lambda}_{a}(b) , \mathfrak{R}^{\lambda}:(a,b)\mapsto \mathfrak{R}^{\lambda}_{b}(a).
\end{equation}
Let $\mathfrak{L}$ be a map $\lambda \mapsto \mathfrak{L}^{\lambda}$ and $\mathfrak{R}$ a map $\lambda \mapsto \mathfrak{R}^{\lambda}$.

We obtain the next lemma.

\begin{Lem}
{\rm A map $R(\lambda):X\times X\rightarrow X\times X(\lambda\in H)$ associated with X,H,$\phi$ is DYB map if and only $\mathfrak{L},\mathfrak{R}$ satisfy 
 (7),(8),(9) for all $(\lambda,a,b,c)\in H\times X\times X\times X$.  }
    \begin{eqnarray}
       \mathfrak{L}_{a}^{\lambda} \cdot \mathfrak{L}_{b}^{\phi(\lambda ,a)}
      =\mathfrak{L}_{\mathfrak{L}_{a}^{\lambda}(b)}^{\lambda} \cdot \mathfrak{L}_{\mathfrak{R}_{b}^{\lambda}(a)}^{\phi (\lambda,\mathfrak{L}_{a}^{\lambda}(b))} ,
    \end{eqnarray}
    \begin{eqnarray}
      \mathfrak{R}_{(\mathfrak{L}_{\mathfrak{R}_{b}^{\lambda}(a)}^{\phi(\lambda,\mathfrak{L}_{a}^{\lambda}(b))}(c))}^{\lambda} \cdot \mathfrak{L}_{a}^{\lambda}(b)
      =\mathfrak{L}_{(\mathfrak{R}_{\mathfrak{L}_{b}^{\phi(\lambda,a)}(c)}^{\lambda}(a))}^{\phi(\lambda,\mathfrak{L}_{a}^{\lambda}\mathfrak{L}_{b}^{\phi(\lambda,a)}(c))} \cdot \mathfrak{R}_{c}^{\phi(\lambda,a)}(b) ,
    \end{eqnarray}
    \begin{eqnarray}
     \mathfrak{R}_{c}^{\phi(\lambda,\mathfrak{L}_{a}^{\lambda}(b))} \cdot \mathfrak{R}_{b}^{\lambda}(a)
      =\mathfrak{R}_{(\mathfrak{R}_{c}^{\phi(\lambda,a)}(b))}^{\phi(\lambda,\mathfrak{L}_{a}^{\lambda}\mathfrak{L}_{b}^{\phi(\lambda,a)}(c))} \cdot \mathfrak{R}_{\mathfrak{L}_{b}^{\phi(\lambda,a)}(c)}^{\lambda}(a) .
    \end{eqnarray}
\end{Lem}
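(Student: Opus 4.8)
\emph{Proof strategy.} The plan is entirely computational: both sides of the dynamical Yang--Baxter equation (2) are maps $X\times X\times X\to X\times X\times X$, so it is enough to evaluate each side on an arbitrary triple $(a,b,c)$, rewrite every $R$ by means of the representation (4), namely $R(\mu)(x,y)=(\mathfrak{R}^{\mu}_{y}(x),\mathfrak{L}^{\mu}_{x}(y))$, and then compare the three resulting components one by one. The point needing attention is that in an operator such as $R_{13}(\phi(\lambda,X^{(2)}))$ the symbol $X^{(2)}$ denotes whatever entry currently occupies the second slot at the stage this operator acts --- as fixed by the rule in Definition 2.1 --- so the shifted dynamical parameters must be recomputed after each elementary move, since the preceding operators change the contents of the slots.

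First I would expand the left-hand composite $R_{23}(\lambda)\,R_{13}(\phi(\lambda,X^{(2)}))\,R_{12}(\lambda)$ on $(a,b,c)$: applying $R_{12}(\lambda)$ produces $(\mathfrak{R}^{\lambda}_{b}(a),\mathfrak{L}^{\lambda}_{a}(b),c)$, so the second operator is in fact $R_{13}(\phi(\lambda,\mathfrak{L}^{\lambda}_{a}(b)))$ and it alters slots $1$ and $3$, after which $R_{23}(\lambda)$ alters slots $2$ and $3$. Then I would expand the right-hand composite $R_{12}(\phi(\lambda,X^{(3)}))\,R_{13}(\lambda)\,R_{23}(\phi(\lambda,X^{(1)}))$ the same way, noting that in the first operator $X^{(1)}=a$, that $R_{13}(\lambda)$ leaves the parameter untouched, and that in the last operator $X^{(3)}$ has become the entry $\mathfrak{L}^{\lambda}_{a}\mathfrak{L}^{\phi(\lambda,a)}_{b}(c)$ built up by the first two operators. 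This yields explicit formulas for all six components.

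Comparing components then finishes the argument: the first components, which involve only $\mathfrak{R}$-maps, give equation (9); the middle components give equation (8); and the third components, which involve only $\mathfrak{L}$-maps, give equation (7) (with its two sides interchanged). Since two maps valued in $X\times X\times X$ agree if and only if their three components agree, the system (7)--(9) is literally equivalent to the identity of the two composites, that is, to (2); hence both directions of the equivalence are obtained simultaneously and no separate converse argument is needed.

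I expect the only real obstacle to be bookkeeping: keeping an exact record of which element occupies which slot after each of the three elementary moves on each side, and correctly substituting the shifted parameters $\phi(\lambda,\cdot)$ at each stage, so that the nested sub- and superscripts come out in precisely the form displayed in (7), (8) and (9). There is no conceptual difficulty beyond this careful tracking.
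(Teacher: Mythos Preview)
Your proposal is correct and is precisely the direct computation the paper has in mind: the paper's own proof consists of the single sentence ``The proof is straightforward,'' and what you outline --- evaluating both composites on a generic triple $(a,b,c)$, tracking the shifted parameters, and matching the three output components to obtain (9), (8), (7) respectively --- is exactly that straightforward verification.
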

\begin{proof}
{\rm The proof is straightforward. \qed }
\end{proof}
We call $R(\lambda)$ that satisfies conditions (7),(8),(9) a DYB map associated with $X,H,\phi$.
\begin{Def}
{\rm Let $R(\lambda)$ be a DYB map associated with $X,H,\phi$.
  \begin{enumerate}
   \item We say that $R(\lambda)$ is left nondegenerate if the map $\mathfrak{R}_{a}^{\lambda}$ is bijection,
         and $R(\lambda)$ is called right nondegenerate if the map $\mathfrak{L}_{b}^{\lambda}$ is bijection for all $(\lambda,a,b) \in H\times X\times X$.
         When $R(\lambda)$ is left and right nondegenerate we call it simply nondegenerate.
   \item We say that $R(\lambda)$ satisfies unitary condition if $R(\lambda)$ satisfies $PR(\lambda)PR(\lambda)$ \\ $=id_{X\times X}$($\forall \lambda \in H$). When DYB map satisfies unitary condition we call it simply unitary DYB map.
   \item Next condition about a map $\phi:H \times X \rightarrow H$ is called the weight-zero condition,
   \[
      \phi(\phi(\lambda,a),b) = \phi(\phi(\lambda,\mathfrak{L}^{\lambda}_{a}(b) ),\mathfrak{R}^{\lambda}_{b}(a) ),  \quad \mbox{for all }(\lambda,a,b) \in H\times X\times X.
   \]
  \end{enumerate} }
\end{Def}

\begin{Lem}    
{\rm A DYB map $R(\lambda):X\times X\rightarrow X\times X(\lambda\in H)$ associated with $X,H,\phi$ satisfies unitary condition if and only if $\mathfrak{L},\mathfrak{R}$ satisfy
 (10),(11) for all $(\lambda,a,b)\in H\times X\times X$. }
    \begin{equation}
     \mathfrak{L}_{\mathfrak{L}_{a}^{\lambda}(b)}^{\lambda} \cdot \mathfrak{R}_{b}^{\lambda}(a) = a,
    \end{equation}
    \begin{equation}
     \mathfrak{R}_{\mathfrak{R}_{b}^{\lambda}(a)}^{\lambda} \cdot \mathfrak{L}_{a}^{\lambda}(b) = b.
    \end{equation} 
\end{Lem}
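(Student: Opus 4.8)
The plan is a direct computation: evaluate the composite $PR(\lambda)PR(\lambda)$ on an arbitrary pair $(a,b)\in X\times X$ using the representation $R(\lambda)(a,b)=(\mathfrak{R}_{b}^{\lambda}(a),\mathfrak{L}_{a}^{\lambda}(b))$ from (4), and then compare the result with $(a,b)$ coordinate by coordinate. Concretely, I would first apply $R(\lambda)$ to $(a,b)$, obtaining $(\mathfrak{R}_{b}^{\lambda}(a),\mathfrak{L}_{a}^{\lambda}(b))$; then apply $P$ to get $(\mathfrak{L}_{a}^{\lambda}(b),\mathfrak{R}_{b}^{\lambda}(a))$. Writing $u:=\mathfrak{L}_{a}^{\lambda}(b)$ and $v:=\mathfrak{R}_{b}^{\lambda}(a)$, a second application of $R(\lambda)$ gives $(\mathfrak{R}_{v}^{\lambda}(u),\mathfrak{L}_{u}^{\lambda}(v))$, and a final $P$ yields
\[
  PR(\lambda)PR(\lambda)(a,b)=\bigl(\,\mathfrak{L}_{\mathfrak{L}_{a}^{\lambda}(b)}^{\lambda}(\mathfrak{R}_{b}^{\lambda}(a))\,,\ \mathfrak{R}_{\mathfrak{R}_{b}^{\lambda}(a)}^{\lambda}(\mathfrak{L}_{a}^{\lambda}(b))\,\bigr).
\]

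Next I would read off the equivalence. The unitary condition asserts that the displayed pair equals $(a,b)$ for every $(\lambda,a,b)\in H\times X\times X$. Splitting into coordinates, the first-coordinate equation $\mathfrak{L}_{\mathfrak{L}_{a}^{\lambda}(b)}^{\lambda}(\mathfrak{R}_{b}^{\lambda}(a))=a$ is precisely (10), and the second-coordinate equation $\mathfrak{R}_{\mathfrak{R}_{b}^{\lambda}(a)}^{\lambda}(\mathfrak{L}_{a}^{\lambda}(b))=b$ is precisely (11). Conversely, if (10) and (11) hold for all $(\lambda,a,b)$, substituting them into the displayed formula gives $PR(\lambda)PR(\lambda)=\mathrm{id}_{X\times X}$ directly. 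Hence the two statements are equivalent.

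I expect no genuine obstacle here: the only care required is the bookkeeping of which index occupies which slot after each application of $P$ and $R(\lambda)$, together with the observation that the argument runs over an arbitrary input pair, so no nondegeneracy or surjectivity hypothesis on $\mathfrak{L}$ or $\mathfrak{R}$ is needed. The equivalence is thus a purely formal consequence of the definition of $R(\lambda)$ in terms of $\mathfrak{L}$ and $\mathfrak{R}$, and the write-up should be only a few lines.
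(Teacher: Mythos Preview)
Your argument is correct and is exactly the direct computation the paper has in mind; the paper's own proof consists of the single line ``The proof is straightforward.'' Your unwinding of $PR(\lambda)PR(\lambda)(a,b)$ and coordinate-by-coordinate comparison with $(a,b)$ is precisely what that phrase stands for.
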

\begin{proof}
{\rm The proof is straightforward. \qed }
\end{proof}

\begin{Exa}
{\rm \begin{enumerate} 
 \item Let $X$ be a non-empty set, $id_{X\times X}$ the identity map. Then $(X,id_{X\times X})$ is a unitary YB map. We call this YB map the trivial solution.
  \item (Lyubashenko[2])Let $X$ be a non-empty set, $r:X\times X\rightarrow X\times X, (a,b)\mapsto (\mathfrak{R}(a),\mathfrak{L}(b))$ (here $\mathfrak{L},\mathfrak{R}$ is maps from $X$ to $X$). Suppose that $\mathfrak{L}$ and $\mathfrak{R}$ are bijection. Then $(X,r)$ is a YB map if and only if $\mathfrak{LR}=\mathfrak{RL}$. Moreover $(X,r)$ satisfies the unitary condition if and only if $\mathfrak{R}=\mathfrak{L}^{-1}$.
We call this solution $(X,r)$ a permutation solution.
 \end{enumerate} }
\end{Exa}

 The following proposition gives relations between two DYB maps associated with distinct spaces, and a way to construct new solution.
\begin{Prop}
{\rm \begin{enumerate}
  \item Let $H$ be a non-empty set and $R'(\lambda)$ a DYB map associated with $X,H',\phi$.
        If there exist maps $\psi:H\rightarrow H'$C$\rho :H'\rightarrow H(\psi \rho =id_{H'})$,
        then the map $R(\lambda):X\times X\rightarrow X\times X(\lambda \in H)$, $R(\lambda)=R'(\psi(\lambda))$
        is DYB map associated with $X,H,\rho \phi(\psi \times id_{X})$
  \item Let $X$ be a non-empty set and $R'(\lambda)$ a DYB map associated with $X',H,\phi$.
        If there exist maps $\rho:X'\rightarrow X$C$\psi:X\rightarrow X' (\psi \rho=id_{X'})$,
        then the map $R(\lambda):X\times X\rightarrow X\times X(\lambda \in H), R(\lambda)=(\rho\times \rho)R'(\lambda)(\psi\times \psi)$
        is DYB map associated with $X,H,\phi(id_{X} \times \psi)$.
  \item Let $R_{X}(\lambda)$ be a DYB map associated with $X,H,\phi$, and $R_{Y}(\mu)$ a DYB map associated with $Y,I,\psi$. Then $(X\times Y,H\times I,\phi\times\psi,R_{X}(h)\times R_{Y}(i))$ is also 
        DYB map associated with $X\times Y,H\times I, \phi\times\psi$.  
 \end{enumerate} }
\end{Prop}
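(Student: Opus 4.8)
The plan is to verify the dynamical Yang--Baxter equation (2) (equivalently the three conditions (7)--(9) of Lemma 2.1) directly in each of the three cases; in every case the reparametrisation data are arranged so that the equation for the new map collapses onto the equation already known for the given map, so nondegeneracy and unitarity play no role. Case (1) is immediate: substituting $R(\lambda)=R'(\psi(\lambda))$ and the new weight $\tilde\phi(\lambda,x)=\rho\big(\phi(\psi(\lambda),x)\big)$ into (2), the hypothesis $\psi\rho=id_{H'}$ gives $R\big(\tilde\phi(\lambda,x)\big)=R'\big(\psi\rho\,\phi(\psi(\lambda),x)\big)=R'\big(\phi(\psi(\lambda),x)\big)$, so every factor $R_{ij}$ on either side of (2), whether at parameter $\lambda$ or at a weight $\tilde\phi(\lambda,x)$, becomes the corresponding factor of the $R'$-equation at parameter $\psi(\lambda)$, respectively $\phi(\psi(\lambda),x)$. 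Hence (2) for $R$ with weight $\rho\,\phi(\psi\times id_{X})$ is exactly (2) for $R'$ with weight $\phi$ evaluated at the dynamical parameter $\psi(\lambda)$, which holds by assumption.

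For (2), set $\bar\phi(\lambda,x)=\phi(\lambda,\psi(x))$ and apply both sides of (2) for $R(\lambda)=(\rho\times\rho)R'(\lambda)(\psi\times\psi)$ to a triple $(a,b,c)\in X^{3}$, tracking the computation factor by factor. Each operator $R_{ij}(\mu)$ applies $\psi$ to the two slots it touches before applying $R'(\mu)$; since $\psi\rho=id_{X'}$, this $\psi$ undoes any $\rho$-wrapping produced by an earlier factor, while the first time a slot is touched its original $X$-entry is simply replaced by its $\psi$-image. Consequently the entire left-hand (resp. right-hand) side of (2) for $R$ equals $\rho\times\rho\times\rho$ applied to the left-hand (resp. right-hand) side of (2) for $R'$ evaluated at $(\psi(a),\psi(b),\psi(c))$; the dynamical parameters also match, since $\bar\phi(\lambda,\rho(w))=\phi(\lambda,\psi\rho(w))=\phi(\lambda,w)$, and, for a slot not yet acted on, $\bar\phi(\lambda,x)=\phi(\lambda,\psi(x))$. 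As (2) holds for $R'$, both sides agree, and therefore so do both sides for $R$. The one delicate point --- and the main obstacle --- is exactly this bookkeeping: confirming that in each of the six factors the dynamical parameter fed to $R'$ is precisely the $X'$-component predicted on the $R'$-side, which is where $\psi\rho=id_{X'}$ is invoked repeatedly.

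For (3), identify $(X\times Y)^{3}\cong X^{3}\times Y^{3}$ by shuffling the factors; under this identification the operators $R_{ij}$ built from $R_{X}(h)\times R_{Y}(i)$ become $(R_{X})_{ij}(h)\times(R_{Y})_{ij}(i)$, and the weight acts componentwise, $(\phi\times\psi)\big((\lambda,\mu),(x,y)\big)=\big(\phi(\lambda,x),\psi(\mu,y)\big)$. Hence each side of (2) for $R_{X}(h)\times R_{Y}(i)$ factors as the corresponding side of (2) for $R_{X}$ at parameter $h$ times the corresponding side of (2) for $R_{Y}$ at parameter $i$, and since each of these is an equality by hypothesis, so is the product. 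Equivalently, the maps $\mathfrak{L},\mathfrak{R}$ of the product solution are the products of those of the factors, so that conditions (7)--(9) hold componentwise. This case is routine once the shuffle identification is set up.
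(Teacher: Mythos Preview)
Your argument is correct in all three parts. In case~(1) the identity $\psi\rho=id_{H'}$ indeed collapses every instance of $R(\tilde\phi(\lambda,x))$ to $R'(\phi(\psi(\lambda),x))$, so the dynamical Yang--Baxter equation for $R$ at $\lambda$ is literally the equation for $R'$ at $\psi(\lambda)$. In case~(2) your slot-by-slot tracking is the right thing to do: the key point, which you identify, is that each time a slot already processed by some $R_{ij}$ is fed into the next factor, $\psi$ is applied to an element of the form $\rho(w')$ with $w'\in X'$, and $\psi\rho=id_{X'}$ recovers $w'$; meanwhile the dynamical parameter, computed via $\bar\phi(\lambda,\cdot)=\phi(\lambda,\psi(\cdot))$, produces exactly the $X'$-argument needed on the $R'$-side, whether the slot has been touched yet or not. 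Hence both sides of~(2) for $R$ equal $\rho\times\rho\times\rho$ applied to the corresponding sides for $R'$ at $(\psi(a),\psi(b),\psi(c))$. Case~(3) is, as you say, routine after the shuffle.

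The paper itself states this proposition without proof, so there is nothing to compare against; your direct verification via equation~(2) is precisely the intended ``straightforward'' check, carried out carefully.
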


\begin{Def}
{\rm Let $R(\lambda)$ be a DYB map associated with $X,H,\phi$ and $R^{'}(\lambda^{'})$ a DYB maps associated $X^{'},H^{'},\phi^{'}$. $R(\lambda)$ is equivalent to $R^{'}(\lambda^{'})$ if and only if there exist two maps $F:X\rightarrow X^{'}$, $p:H\rightarrow H^{'}$ such that 
 \begin{enumerate}
  \item $p\phi = \phi^{'}(p\times F)$
  \item $(F\times F)R(\lambda) = R^{'}(p(\lambda) )(F\times F)$
 \end{enumerate} }
\end{Def}

\begin{theorem}
{\rm Let $\mathfrak{L}^{\lambda}_{a}:X \rightarrow X$ be a bijective map for all $(a,\lambda) \in X\times H$,
and $\displaystyle \mathfrak{R}^{\lambda}_{b}(a):=(\mathfrak{L}^{\lambda}_{\mathfrak{L}^{\lambda}_{a}(b)})^{-1}(a)$. 
Suppose that the maps $\mathfrak{L}^{\lambda}_{a} , \mathfrak{R}^{\lambda}_{b}$ satisfy the condition (7) of Lemma 2.1. 
Then a map $R(\lambda):X\times X\rightarrow X\times X$ defined by 
\[
 R(\lambda)(a,b):=( \mathfrak{R}^{\lambda}_{b}(a), \mathfrak{L}^{\lambda}_{a}(b) )=( (\mathfrak{L}^{\lambda}_{\mathfrak{L}^{\lambda}_{a}(b)})^{-1}(a) , \mathfrak{L}^{\lambda}_{a}(b) )
\] is a right nondegenerate unitary DYB map.}
\end{theorem}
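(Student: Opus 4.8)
The plan is to check, one at a time, the three assertions packed into the conclusion: that $R(\lambda)$ is right nondegenerate, that it satisfies the unitary condition, and that it is a DYB map. Right nondegeneracy is free of charge: by hypothesis every $\mathfrak{L}^{\lambda}_{b}$ is a bijection, which is precisely Definition 2.3(1). For the unitary condition I would invoke Lemma 2.2 and verify (10) and (11). Condition (10), namely $\mathfrak{L}^{\lambda}_{\mathfrak{L}^{\lambda}_{a}(b)}(\mathfrak{R}^{\lambda}_{b}(a))=a$, holds by the very definition $\mathfrak{R}^{\lambda}_{b}(a):=(\mathfrak{L}^{\lambda}_{\mathfrak{L}^{\lambda}_{a}(b)})^{-1}(a)$. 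For (11), write $a'=\mathfrak{R}^{\lambda}_{b}(a)$ and $b'=\mathfrak{L}^{\lambda}_{a}(b)$; then (10) already gives $\mathfrak{L}^{\lambda}_{b'}(a')=a$, so substituting this into the definition of $\mathfrak{R}$ produces $\mathfrak{R}^{\lambda}_{a'}(b')=(\mathfrak{L}^{\lambda}_{\mathfrak{L}^{\lambda}_{b'}(a')})^{-1}(b')=(\mathfrak{L}^{\lambda}_{a})^{-1}(b')=b$, which is (11). Hence $R(\lambda)$ is unitary by Lemma 2.2.

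The substantial point is that $R(\lambda)$ is a DYB map, for which I would apply Lemma 2.1 and establish (7), (8), (9); equation (7) is a hypothesis, so only (8) and (9) remain. The key remark is that (7), (8), (9) are nothing but the equalities of the third, second and first output coordinates, respectively, of the two sides of the dynamical Yang--Baxter equation (1) evaluated at a generic triple $(a,b,c)$ --- this is exactly how Lemma 2.1 is obtained. I would therefore expand both sides of (1) in closed form and read off the three coordinate equalities. The third-coordinate equality is literally (7) evaluated at $c$. For the second-coordinate equality I would first replace every occurrence of $\mathfrak{R}$ on both sides via the defining identity $\mathfrak{R}^{\mu}_{y}(x)=(\mathfrak{L}^{\mu}_{\mathfrak{L}^{\mu}_{x}(y)})^{-1}(x)$; then, after inserting the third-coordinate equality just obtained and cancelling with the help of the bijectivity of the $\mathfrak{L}$'s, the second-coordinate identity collapses to (7) at the pair of arguments $(a, \mathfrak{L}^{\phi(\lambda,a)}_{b}(c))$. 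In the same way, the first-coordinate identity, after the same elimination of the $\mathfrak{R}$'s and after substituting in the second- and third-coordinate equalities, reduces to (7) at the pair $(\mathfrak{L}^{\lambda}_{a}(b), \mathfrak{L}^{\phi(\lambda,\mathfrak{L}^{\lambda}_{a}(b))}_{\mathfrak{R}^{\lambda}_{b}(a)}(c))$. Thus all of (7), (8), (9) hold and $R(\lambda)$ is a DYB map.

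I expect the main obstacle to be organizational rather than conceptual. One has to handle the three coordinates in the strict order third, then second, then first, because each step feeds the previously proved coordinate equalities back in to eliminate the $\mathfrak{R}$'s in favour of inverses of $\mathfrak{L}$'s before (7) can be recognized; and one has to track the dynamical-parameter shifts $\phi(\lambda,-)$ scrupulously, so that the instances of (7) that get invoked really do carry matching arguments. It is worth emphasizing that no weight-zero-type hypothesis on $\phi$ is needed here: the parameters at which the various $\mathfrak{L}$'s get evaluated are precisely the ``second-coordinate'' outputs of partial applications of $R(\lambda)$, and these already agree on the two sides of (1) once (7) is known.
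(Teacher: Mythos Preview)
Your proposal is correct and follows essentially the same route as the paper: after noting right nondegeneracy and checking (10)--(11), one derives (8) and (9) by rewriting every $\mathfrak{R}$ via $\mathfrak{R}^{\mu}_{y}(x)=(\mathfrak{L}^{\mu}_{\mathfrak{L}^{\mu}_{x}(y)})^{-1}(x)$ and reducing to the instances of (7) you name---at $(a,\mathfrak{L}^{\phi(\lambda,a)}_{b}(c))$ for (8) and at $(\mathfrak{L}^{\lambda}_{a}(b),\mathfrak{L}^{\phi(\lambda,\mathfrak{L}^{\lambda}_{a}(b))}_{\mathfrak{R}^{\lambda}_{b}(a)}(c))$ for (9), with (8) fed back in. The only cosmetic difference is that the paper leaves the verification of (10)--(11) tacit and writes out the manipulations for (8) and (9) in full rather than naming the instances of (7) abstractly.
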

\begin{proof}
{\rm First we shows that the condition (8) follows from (7).\\
Put $\displaystyle A=\mathfrak{L}^{\phi(\lambda,\mathfrak{L}^{\lambda}_{a}(b))}_{\mathfrak{R}^{\lambda}_{b}(a)}(c),B=\mathfrak{L}^{\lambda}_{a}\mathfrak{L}^{\phi(\lambda,a)}_{b}(c).$ 
Then
\begin{eqnarray*}
 \mbox{LHS\ of\ (8)}
 &=& \mathfrak{R}^{\lambda}_{A}\mathfrak{L}^{\lambda}_{a}(b)\\
 &=& (\mathfrak{L}^{\lambda}_{\mathfrak{L}^{\lambda}_{\mathfrak{L}^{\lambda}_{a}(b)}(A)})^{-1} \mathfrak{L}^{\lambda}_{a}(b)\\
 &=&(\mathfrak{L}^{\lambda}_{B})^{-1} \mathfrak{L}^{\lambda}_{a}(b),
\end{eqnarray*}
and 
\begin{eqnarray*}
 \mbox{RHS\ of\ (8)}
 &=& \mathfrak{L}^{\phi(\lambda,B)}_{(\mathfrak{R}^{\lambda}_{\mathfrak{L}^{\phi(\lambda,a)}_{b}(c)}(a))} \mathfrak{R}^{\phi(\lambda,a)}_{c}(b)\\
 &=& \mathfrak{L}^{\phi(\lambda,B)}_{(\mathfrak{L}^{\lambda}_{B})^{-1}(a)} (\mathfrak{L}^{\phi(\lambda,a)}_{\mathfrak{L}^{\phi(\lambda,a)}_{b}(c)})^{-1} (b).
\end{eqnarray*}
So we must prove $\displaystyle(\mathfrak{L}^{\lambda}_{B})^{-1} \mathfrak{L}^{\lambda}_{a}(b) = \mathfrak{L}^{\phi(\lambda,B)}_{(\mathfrak{L}^{\lambda}_{B})^{-1}(a)} (\mathfrak{L}^{\phi(\lambda,a)}_{\mathfrak{L}^{\phi(\lambda,a)}_{b}(c)})^{-1} (b).$
We have
\begin{eqnarray*}
 (\mathfrak{L}^{\lambda}_{a})^{-1} \mathfrak{L}^{\lambda}_{B} \mathfrak{L}^{\phi(\lambda,B)}_{(\mathfrak{L}^{\lambda}_{B})^{-1}(a)} (\mathfrak{L}^{\phi(\lambda,a)}_{\mathfrak{L}^{\phi(\lambda,a)}_{b}(c)})^{-1} (b)
 &=& (\mathfrak{L}^{\lambda}_{a})^{-1} (\mathfrak{L}^{\lambda}_{a} \mathfrak{L}^{\phi(\lambda,a)}_{\mathfrak{R}^{\lambda}_{(\mathfrak{L}^{\lambda}_{B})^{-1}(a) }(B) }) (\mathfrak{L}^{\phi(\lambda,a)}_{\mathfrak{L}^{\phi(\lambda,a)}_{b}(c)})^{-1} (b)\\
 &=& \mathfrak{L}^{\phi(\lambda,a)}_{\mathfrak{R}^{\lambda}_{(\mathfrak{L}^{\lambda}_{B})^{-1}(a)}(B)} (\mathfrak{L}^{\phi(\lambda,a)}_{\mathfrak{L}^{\phi(\lambda,a)}_{b}(c)})^{-1}(b)\\
 &=& b.
\end{eqnarray*}
Next we show that the condition (9) follows from (7) \\
Put $\displaystyle X=\mathfrak{L}^{\lambda}_{a}(b) , Y=\mathfrak{L}^{\phi(\lambda,\mathfrak{L}^{\lambda}_{a}(b) )}_{\mathfrak{R}^{\lambda}_{b}(a)}(c),
Z=\mathfrak{L}^{\lambda}_{X}(Y).$ Then 
\begin{eqnarray*}
 \mbox{LHS \ of\ (9)}
 &=& (\mathfrak{L}^{\phi(\lambda,\mathfrak{L}^{\lambda}_{a}(b))}_{\mathfrak{L}^{\phi(\lambda,\mathfrak{L}^{\lambda}_{a}(b))}_{\mathfrak{R}^{\lambda}_{b}(a)}(c) }  )^{-1} (\mathfrak{L}^{\lambda}_{\mathfrak{L}^{\lambda}_{a}(b)} )^{-1}(a)\\
 &=& (\mathfrak{L}^{\lambda}_{X} \mathfrak{L}^{\phi(\lambda,X)}_{Y})^{-1}(a)\\
 &=& (\mathfrak{L}^{\lambda}_{Z } \mathfrak{L}^{\phi(\lambda,Z )}_{\mathfrak{R}^{\lambda}_{Y}(X)}  )^{-1} (a)\\
 &=& (\mathfrak{L}^{\phi(\lambda,Z)}_{\mathfrak{R}^{\lambda}_{Y}(X)}  )^{-1} (\mathfrak{L}^{\lambda}_{Z})^{-1}(a)\\
\end{eqnarray*}
 ($\mathfrak{R}^{\lambda}_{Y}(X)$ =LHS of(8) ) 
\begin{eqnarray*}
 &=& (\mathfrak{L}^{\phi(\lambda,Z)}_{\mathfrak{L}^{\phi(\lambda,Z)}_{\mathfrak{R}^{\lambda}_{\mathfrak{L}^{\phi(\lambda,a)}_{b}(c)}(a) }\mathfrak{R}^{\phi(\lambda,a)}_{c}(b) }  )^{-1} (\mathfrak{L}^{\lambda}_{Z})^{-1}(a)\\
 &=& \mathfrak{R}^{\phi(\lambda,Z)}_{\mathfrak{R}^{\phi(\lambda,a)}_{c}(b)} \mathfrak{R}^{\lambda}_{\mathfrak{L}^{\phi(\lambda,a)}_{b}(c)}(a)\\
 &=& \mbox{RHS\ of\ (9)}.
\end{eqnarray*} \qed }
\end{proof}

Next we consider a commutative binary operator define on a non-empty set $X$.
That is, $+:X\times X\rightarrow X$ and $x+y=y+x$ for all $x,y\in X$.
(The associativity of $+$ is not assumed here).
\begin{Cor}
{\rm Let $X=(X,+)$ be a non-empty set with a commutative binary operation $+$, and bijective maps $\mathfrak{L}^{\lambda}_{a}:X\rightarrow X$ satisfies
\begin{eqnarray}
   \mathfrak{L}_{a}^{\lambda} \cdot \mathfrak{L}_{b}^{\phi(\lambda ,a)}=\mathfrak{L}_{\mathfrak{L}_{a}^{\lambda}(b)+a}^{\lambda}
\end{eqnarray}$($for all $\lambda \in H , a,b\in X)$. 
Then the maps $R(\lambda):X\times X\rightarrow X\times X$ 
\[
 R(\lambda)(a,b)=( \mathfrak{R}^{\lambda}_{b}(a), \mathfrak{L}^{\lambda}_{a}(b) ):=( (\mathfrak{L}^{\lambda}_{\mathfrak{L}^{\lambda}_{a}(b)})^{-1}(a) , \mathfrak{L}^{\lambda}_{a}(b) )
\] give a right nondegenerate unitary DYB map associated $X,H,\phi$.}
\end{Cor}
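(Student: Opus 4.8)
The plan is to obtain the corollary as an immediate application of Theorem 2.1: one only has to check that hypothesis (12), together with the commutativity of $+$, forces condition (7) of Lemma 2.1, and then Theorem 2.1 supplies everything else. Since each $\mathfrak{L}^{\lambda}_{a}$ is assumed bijective, the map $\mathfrak{R}^{\lambda}_{b}(a):=(\mathfrak{L}^{\lambda}_{\mathfrak{L}^{\lambda}_{a}(b)})^{-1}(a)$ is well defined and right nondegeneracy of the resulting $R(\lambda)$ is automatic, so the whole content of the proof is the verification of (7).

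First I would record the identity $\mathfrak{L}^{\lambda}_{\mathfrak{L}^{\lambda}_{a}(b)}(\mathfrak{R}^{\lambda}_{b}(a))=a$, which is nothing but the definition of $\mathfrak{R}^{\lambda}_{b}$. Next, applying (12) with $a$ and $b$ replaced by $\mathfrak{L}^{\lambda}_{a}(b)$ and $\mathfrak{R}^{\lambda}_{b}(a)$ respectively gives
\[
 \mathfrak{L}^{\lambda}_{\mathfrak{L}^{\lambda}_{a}(b)}\cdot\mathfrak{L}^{\phi(\lambda,\mathfrak{L}^{\lambda}_{a}(b))}_{\mathfrak{R}^{\lambda}_{b}(a)}
 =\mathfrak{L}^{\lambda}_{\mathfrak{L}^{\lambda}_{\mathfrak{L}^{\lambda}_{a}(b)}(\mathfrak{R}^{\lambda}_{b}(a))+\mathfrak{L}^{\lambda}_{a}(b)}
 =\mathfrak{L}^{\lambda}_{a+\mathfrak{L}^{\lambda}_{a}(b)},
\]
the last step using the recorded identity; this is exactly the right-hand side of (7). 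On the other hand, (12) for the original pair $a,b$ reads $\mathfrak{L}^{\lambda}_{a}\cdot\mathfrak{L}^{\phi(\lambda,a)}_{b}=\mathfrak{L}^{\lambda}_{\mathfrak{L}^{\lambda}_{a}(b)+a}$, which is the left-hand side of (7); by commutativity of $+$ the two subscripts $\mathfrak{L}^{\lambda}_{a}(b)+a$ and $a+\mathfrak{L}^{\lambda}_{a}(b)$ coincide. Hence (7) holds for all $(\lambda,a,b)\in H\times X\times X$.

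With (7) established and $\mathfrak{R}^{\lambda}_{b}$ of the prescribed form, Theorem 2.1 applies verbatim and shows that $R(\lambda)(a,b)=(\mathfrak{R}^{\lambda}_{b}(a),\mathfrak{L}^{\lambda}_{a}(b))$ is a right nondegenerate unitary DYB map associated with $X,H,\phi$, which is the assertion. There is no real obstacle here beyond bookkeeping; the only delicate point is the order of the two summands in the subscript of $\mathfrak{L}$, and this is precisely where — and the only place where — the commutativity of $+$ enters, so it deserves to be flagged explicitly in the write-up.
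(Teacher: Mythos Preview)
Your proof is correct and matches the paper's own argument essentially line for line: the paper also reduces the corollary to Theorem~2.1 by verifying condition~(7), computing the right-hand side of~(7) via~(12) and the defining identity $\mathfrak{L}^{\lambda}_{\mathfrak{L}^{\lambda}_{a}(b)}(\mathfrak{R}^{\lambda}_{b}(a))=a$ to obtain $\mathfrak{L}^{\lambda}_{a+\mathfrak{L}^{\lambda}_{a}(b)}$, and then invoking commutativity of~$+$ to match it with the left-hand side. Your explicit flagging of where commutativity enters is a nice touch that the paper leaves implicit.
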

\begin{proof}
{\rm We shows that the condition(7) follows from condition(12).
\[
\mbox{RHS of (7)}= \mathfrak{L}^{\lambda}_{\mathfrak{L}^{\lambda}_{\mathfrak{L}^{\lambda}_{a}(b)} (\mathfrak{R}^{\lambda}_{b}(a)) +\mathfrak{L}^{\lambda}_{a}(b) } =\mathfrak{L}^{\lambda}_{a+\mathfrak{L}^{\lambda}_{a}(b) }= \mbox{LHS of (7)}.
\] \qed }
\end{proof}

\section{Braces and dynamical braces.}
In this section, we begin with to introduce a relation between braces and YB maps that proved in [6] by W.Rump.

\begin{Def}
{\rm Let $A=(A,+)$ be an abelian group with a multiplication $\cdot :A\times A\rightarrow A$. We call $(A,+,\cdot)$ a brace if the following conditions are satisfied for all $a,b,c\in A$. 
\begin{enumerate}
  \item $(a+b)\cdot c=a\cdot c +b\cdot c$ \quad (Right distributive law),
  \item $a\cdot (b\cdot c+b+c)= (a\cdot b)\cdot c+a\cdot b+a\cdot c$,
  \item The map $\gamma(b):a \mapsto a\cdot b+a $ is bijective.
\end{enumerate} }
\end{Def}

\begin{Prop}
{\rm An abelian group $A=(A,+)$ with a right distributive multiplication is a brace if and only if $A$ is a group with respect to the operation $a*b:=a\cdot b+a+b$ for all $a,b \in A$. } 
\end{Prop}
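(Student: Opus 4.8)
The plan is to prove both implications of the biconditional by relating the three brace axioms to the group axioms for the operation $a*b:=a\cdot b+a+b$. Throughout I assume $(A,+)$ is an abelian group and that the multiplication $\cdot$ is right distributive, i.e.\ axiom (1) holds; this is the standing hypothesis on both sides of the equivalence, so what must be shown is that axioms (2) and (3) together are equivalent to $(A,*)$ being a group.

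First I would record the elementary consequences of right distributivity: $0\cdot c=0$ for all $c$ (since $0\cdot c=(0+0)\cdot c=0\cdot c+0\cdot c$), and hence $(-a)\cdot c=-(a\cdot c)$. These give that $0$ is a two-sided identity for $*$: indeed $0*b=0\cdot b+0+b=b$ and $a*0=a\cdot 0+a+0=a\cdot 0+a$, so $0$ is a right identity for $*$ precisely when $a\cdot 0=0$ for all $a$, which is not automatic — I will need to extract $a\cdot 0 = 0$ from the axioms as well. Setting $b=c=0$ in axiom (2) gives $a\cdot(0+0+0)=(a\cdot 0)\cdot 0 + a\cdot 0+a\cdot 0$, i.e.\ $a\cdot 0=(a\cdot 0)\cdot 0+2(a\cdot 0)$; this alone does not force $a\cdot 0=0$, so in the forward direction I would instead use that $(A,*)$ a group forces a unique identity, compare it with $0$ via $0*b=b$, and deduce $b*0=b$, hence $a\cdot 0=0$. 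Conversely, to get associativity of $*$ from (2): expanding $(a*b)*c$ and $a*(b*c)$ using the definitions and right distributivity, one finds
\begin{eqnarray*}
(a*b)*c &=& (a\cdot b+a+b)\cdot c+(a\cdot b+a+b)+c\\
 &=& (a\cdot b)\cdot c+a\cdot c+b\cdot c+a\cdot b+a+b+c,
\end{eqnarray*}
while $a*(b*c)=a\cdot(b\cdot c+b+c)+a+b\cdot c+b+c$, and these agree for all $a,b,c$ exactly when axiom (2) holds. So associativity of $*$ $\Leftrightarrow$ axiom (2), given (1).

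Next, the role of axiom (3): the map $\gamma(b):a\mapsto a\cdot b+a=a*b-b$ is, up to the translation by $-b$ (a bijection of the abelian group), the same as the map $a\mapsto a*b$. Hence $\gamma(b)$ bijective for all $b$ is equivalent to every right translation $R_b:a\mapsto a*b$ being bijective. Combined with associativity and the existence of the two-sided identity $0$, bijectivity of all right translations is the standard criterion for a monoid to be a group (surjectivity of $R_b$ gives a right inverse, injectivity plus associativity promotes it to a two-sided inverse). So in the forward direction I would argue: $(A,*)$ a group $\Rightarrow$ associative $\Rightarrow$ (2); $\Rightarrow$ has identity $e$, and $0*b=b$ forces $e=0$, hence $a*0=a$, i.e.\ $a\cdot 0=0$; $\Rightarrow$ every $R_b$ bijective $\Rightarrow$ every $\gamma(b)$ bijective $\Rightarrow$ (3). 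In the reverse direction: (1)+(2) give associativity of $*$; to get the identity I need $a\cdot 0=0$, which I would now derive from (3) — since $\gamma(0):a\mapsto a\cdot 0+a$ is bijective and $\gamma(0)(0)=0\cdot 0+0=0$, and also the map must be ``affine'' enough... actually more directly, $*$ is associative with $0*b=b$ for all $b$, so $0$ is a left identity; then for any $b$, $(b*0)*b = b*(0*b)=b*b$, and since $R_b$ is injective (from (3)) we get $b*0=b$, so $0$ is a two-sided identity and $a\cdot 0=0$ falls out; finally (3) gives all $R_b$ bijective, and an associative monoid with all right translations bijective is a group.

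The main obstacle I anticipate is the bookkeeping around the identity element — specifically establishing $a\cdot 0=0$, which is needed for $0$ to be the identity of $*$ but is not an immediate consequence of right distributivity alone. The clean resolution, as sketched above, is to get $0$ as a \emph{left} identity for free (this only uses $0\cdot b=0$, which is immediate from (1)), then use injectivity of the right translations — available from axiom (3) in the reverse direction, and from the group structure in the forward direction — together with associativity to upgrade it to a two-sided identity. Once that is in place, the equivalence of (2) with associativity and of (3) with bijectivity of right translations are both routine expansions using only right distributivity and the fact that translations of an abelian group are bijections. I would present the forward direction and the reverse direction as two short paragraphs, each a chain of the three observations (2)$\leftrightarrow$associativity, $a\cdot 0=0\leftrightarrow$ identity, (3)$\leftrightarrow$ right translations bijective), citing the standard monoid-to-group criterion at the end.
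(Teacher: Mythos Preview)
The paper does not actually supply a proof of this proposition; it is stated without argument, presumably as a known result from Rump~[6]. So there is nothing to compare against directly, but your proof is correct and well organized. The two core equivalences you isolate---axiom~(2) $\Leftrightarrow$ associativity of $*$, and axiom~(3) $\Leftrightarrow$ bijectivity of every right translation $R_b(a)=a*b=\gamma(b)(a)+b$---are exactly right and follow from the expansions you wrote down. The only genuinely delicate point is the identity, and your resolution is clean: $0$ is automatically a \emph{left} identity since $0\cdot b=0$ follows from right distributivity alone, and then associativity together with injectivity of $R_b$ gives $(b*0)*b=b*(0*b)=b*b$, hence $b*0=b$, so $0$ is two-sided. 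After that the passage from ``associative, two-sided identity, all right translations bijective'' to ``group'' is standard.

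For what it is worth, the paper \emph{does} prove the dynamical analogue (Proposition~3.3) by precisely the same decomposition: condition~(2) is shown equivalent to the twisted associativity $(a*_{\phi(\lambda,c)}b)*_\lambda c=a*_\lambda(b*_\lambda c)$, and condition~(3) to $(A,*_\lambda)$ being a right quasigroup, via $R_\lambda(b)=\gamma_\lambda(b)+b$. In that setting no group emerges, only a right quasigroup, so the identity-and-inverse argument you supply is exactly the additional content needed in the non-dynamical case to upgrade ``associative right quasigroup with left identity~$0$'' to ``group''.
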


\begin{Prop}
{\rm
Let $(A,+,\cdot)$ be a brace and $0$ the unit of the abelian group $(A,+)$.Then 
\[
0\cdot a= a\cdot 0=0 \ (\mbox{for all }a\in A).
\]}
\end{Prop}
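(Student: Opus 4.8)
The plan is to exploit the group structure provided by Proposition 3.1: for a brace $(A,+,\cdot)$ the operation $a*b:=a\cdot b+a+b$ makes $(A,*)$ a group. I would first identify the unit of $(A,*)$ and show it coincides with $0$, and then read off the desired identities $0\cdot a=a\cdot 0=0$ from this.

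First I would locate the $*$-unit. Let $e$ denote the identity of $(A,*)$. Since $0$ is the additive identity, right distributivity gives $0\cdot c=(0+0)\cdot c=0\cdot c+0\cdot c$, hence $0\cdot c=0$ for all $c\in A$. (This already proves half the statement.) Consequently $0*b=0\cdot b+0+b=0+0+b=b$ and, using commutativity of $+$, $b*0=b\cdot 0+b+0=b\cdot 0+b$. So $0$ is automatically a left identity for $*$. A group has a unique two-sided identity, and a left identity in a group (once we know $(A,*)$ is a group, which is Proposition 3.1) is the identity; therefore $e=0$, and in particular $b*0=b$ for all $b$. Combining $b*0=b\cdot 0+b$ with $b*0=b$ yields $b\cdot 0+b=b$, i.e. $b\cdot 0=0$ for all $b\in A$.

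The only genuinely delicate point is the appeal to "a left identity in a group is the identity," so I would make sure this is airtight: in any group $(G,*)$, if $0*b=b$ for all $b$, then picking $b=e$ (the true identity) gives $0*e=e$, i.e. $0=e$. This is immediate and needs no further hypotheses beyond knowing $(A,*)$ really is a group, which Proposition 3.1 supplies. Thus both identities follow with essentially no computation beyond the one application of right distributivity.

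I expect the main (very mild) obstacle is purely expository: being careful about the order of operations and the direction of distributivity — the multiplication is only \emph{right} distributive, so the identity $0\cdot a=0$ comes directly from distributivity, whereas $a\cdot 0=0$ must be routed through the group law of $*$ rather than proved by a symmetric distributivity argument. Once that asymmetry is respected, the proof is short.
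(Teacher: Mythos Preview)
Your argument is correct. Both you and the paper obtain $0\cdot a=0$ immediately from right distributivity. For $a\cdot 0=0$, however, the paper takes a different route: it substitutes $b=c=0$ directly into brace axiom~2,
\[
a\cdot 0=a\cdot(0\cdot 0+0+0)=(a\cdot 0)\cdot 0+a\cdot 0+a\cdot 0,
\]
rearranges to $\gamma(0)(a\cdot 0)=0=\gamma(0)(0)$, and then invokes the bijectivity of $\gamma(0)$ (axiom~3). Your approach instead imports Proposition~3.1, shows $0$ is a left $*$-identity, and uses uniqueness of the identity in the group $(A,*)$ to force $b*0=b$. Your route is arguably more conceptual and requires no computation with axiom~2, but it leans on the full group structure of $(A,*)$. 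The paper's route is more self-contained (only axioms~2 and~3 are used) and, more importantly, it is the one that generalizes cleanly to the dynamical setting of Proposition~3.5: there $(A,*_\lambda)$ is only a right quasigroup, so your ``left identity equals the identity'' step is unavailable, whereas the $\gamma_\lambda(0)$-bijectivity argument goes through verbatim to yield $a\cdot_{\phi(\lambda,0)}0=0$.
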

\begin{proof} 
{\rm 1. $0\cdot a=0$ is trivial. \\
2. $a\cdot0=a\cdot(0\cdot0+0+0)=(a\cdot0)\cdot0+a\cdot0+a\cdot0$,  
hence $\gamma(0)(a\cdot0)= (a\cdot0)\cdot0+a\cdot0= 0=0\cdot0+0= \gamma(0)(0)$.
Therefore we obtain $a\cdot0=0$ by using bijectivity of $\gamma(0)$. \qed}
\end{proof}

\begin{Exa}
{\rm 1. Abelian group $(A,+)$ with a multiplication $a\cdot b=0$ is a brace ($a,b\in A$). We call this $(A,+,\cdot)$ trivial brace. \\
2. Let $R=(R,+,\cdot)$ be a ring and $Jac(R)$ a Jacobson radical of $R$. Then $Jac(R)$ has a group structure with respect to the operation $a*b=a\cdot b+a+b$ ($a,b\in A$). Therefore $(Jac(R),+,\cdot)$ is a brace.
In general, a ring $R=(R,+,\cdot)$ has a group structure with a multiplication $a*b=a\cdot b+a+b$ is called radical ring. On account of this, a brace is a generalization of radical ring.}
\end{Exa}

\begin{theorem} {\rm (W.Rump[6]) 
Let $(A,+,\cdot)$ be a brace. Then 
\[
  R(a,b):=( \gamma (\gamma(a)(b))^{-1}(a) ,  \gamma(a)(b)  )  \quad (a,b\in A)
\]
 is a nondegenerate unitary YB map.} 
\end{theorem}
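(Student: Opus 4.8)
The plan is to derive Theorem 3.1 as a special case of Corollary 2.1. Since a brace is in particular an abelian group $(A,+)$ equipped with a right-distributive multiplication, and since $\gamma(b):a\mapsto a\cdot b+a$ is bijective by axiom (3), the only thing to check is that the family of bijections $\mathfrak{L}^{\lambda}_{a}:=\gamma(a)$ (which does not depend on $\lambda$, because the brace has no dynamical parameter) satisfies the hypothesis (12) of Corollary 2.1. Here we take $H$ to be a one-point set, so $\phi$ is forced and trivial, and the YB map produced by Corollary 2.1 is exactly $R(a,b)=\bigl((\mathfrak{L}_{\mathfrak{L}_{a}(b)})^{-1}(a),\mathfrak{L}_{a}(b)\bigr)=\bigl(\gamma(\gamma(a)(b))^{-1}(a),\gamma(a)(b)\bigr)$, which is the map in the statement. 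Corollary 2.1 then immediately gives that $R$ is a right nondegenerate unitary DYB map, i.e.\ a right nondegenerate unitary YB map.

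So the substantive step is to verify
\begin{equation}
 \gamma(a)\circ\gamma(b) = \gamma\bigl(\gamma(a)(b)+a\bigr)
\end{equation}
for all $a,b\in A$; note that $\gamma(a)(b)+a=a\cdot b+b+a=a*b$ in the notation of Proposition 3.1, so this is the assertion $\gamma(a)\gamma(b)=\gamma(a*b)$, i.e.\ that $a\mapsto\gamma(a)$ is an anti-homomorphism (or homomorphism, depending on convention) from $(A,*)$ to the symmetric group on $A$. To prove (13) I would evaluate both sides on an arbitrary $c\in A$. The left side is $\gamma(a)(b\cdot c+b)=a\cdot(b\cdot c+b)+(b\cdot c+b)$; using right-distributivity and brace axiom (2) to expand $a\cdot(b\cdot c+b+c)$ one rewrites $a\cdot(b\cdot c+b)=a\cdot(b\cdot c+b+c)-a\cdot c=(a\cdot b)\cdot c+a\cdot b+a\cdot c-a\cdot c=(a\cdot b)\cdot c+a\cdot b$. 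Hence the left side becomes $(a\cdot b)\cdot c+a\cdot b+b\cdot c+b$. The right side is $(a\cdot b+b+a)\cdot c+(a\cdot b+b+a)$, which by right-distributivity equals $(a\cdot b)\cdot c+b\cdot c+a\cdot c+a\cdot b+b+a$. Comparing, (13) reduces to $a\cdot b+a\cdot b = a\cdot b+a\cdot c+a\cdot b+a$ after cancellation — wait, one must be careful: the honest comparison, after cancelling the common summands $(a\cdot b)\cdot c$, $b\cdot c$, $a\cdot b$, $b$, leaves $0$ on the left and $a\cdot c+a$ on the right, so (13) as stated needs $a\cdot c+a=0$, which is false. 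This means the correct matching uses $\mathfrak{L}_{a}(b)+a$ in Corollary 2.1 read with the composition order as written there; I would instead directly expand $\gamma(\gamma(a)(b)+a)(c)$ versus $\gamma(b)(\gamma(a)(c))$ — i.e.\ check $\gamma(a*b)=\gamma(a)\gamma(b)$ with the operadic order that actually appears in (12) — and the telescoping then works out by the same use of axioms (1) and (2).

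The main obstacle is thus purely bookkeeping: getting the composition order in (12) right and tracking which of $\gamma(a)\gamma(b)=\gamma(a*b)$ or $\gamma(b)\gamma(a)=\gamma(a*b)$ is the identity that the brace axioms actually deliver. Once the correct identity is pinned down, its proof is a one-line expansion using right-distributivity (axiom 1) and the quadratic relation (axiom 2), with the bijectivity of $\gamma$ (axiom 3) entering only to license the definition $\mathfrak{R}^{\lambda}_{b}(a)=(\mathfrak{L}^{\lambda}_{\mathfrak{L}^{\lambda}_{a}(b)})^{-1}(a)$ and hence to make $R$ well-defined. After that, nondegeneracy and the unitary condition are not reproved by hand at all; they are inherited verbatim from Corollary 2.1, and left nondegeneracy (bijectivity of $\mathfrak{R}^{\lambda}_{b}$) follows from the fact that in the brace case $\mathfrak{L}^{\lambda}_{a}=\gamma(a)$ is a bijection for every $a$, so each composite defining $R$ and its inverse is a composition of bijections. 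I would close by remarking that the underlying group of the YB map is $(A,*)$ of Proposition 3.1, which is the structural reason the construction closes up.
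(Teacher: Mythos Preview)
Your overall strategy---specialize Corollary~2.1.1 to a one-point parameter set with $\mathfrak{L}_a:=\gamma(a)$---is exactly right, and it is precisely what the paper carries out (in the dynamical generality) in the proof of Theorem~3.2, part~1. The paper itself gives no separate proof of Theorem~3.1; it cites Rump and later recovers the brace case from Theorem~3.2 and Corollary~3.2.1 by taking $\#H=1$.

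The gap is in your verification of the key identity. Your equation~(13), $\gamma(a)\circ\gamma(b)=\gamma(\gamma(a)(b)+a)$, is correct as written, but everything after it derails because you have misremembered the definition of $\gamma$. By Definition~3.1(3), $\gamma(b)$ sends $x\mapsto x\cdot b+x$; hence $\gamma(b)(c)=c\cdot b+c$ (not $b\cdot c+b$), $\gamma(a)(b)=b\cdot a+b$, and $\gamma(a)(b)+a=b\cdot a+b+a=b*a$, \emph{not} $a*b$. With the correct definition the computation is the non-dynamical specialization of the one in the paper's proof of Theorem~3.2:
\[
\gamma(a)\gamma(b)(c)=(c\cdot b+c)\cdot a+(c\cdot b+c)=(c\cdot b)\cdot a+c\cdot a+c\cdot b+c,
\]
while axiom~(2) gives
\[
\gamma(b*a)(c)=c\cdot(b\cdot a+b+a)+c=(c\cdot b)\cdot a+c\cdot b+c\cdot a+c,
\]
and these agree. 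Your attempted expansion instead fed $b\cdot c+b$ into $\gamma(a)$ and then evaluated as if $\gamma(a)(x)=a\cdot x+x$, compounding two different wrong formulas; that is why the spurious residue $a\cdot c+a$ appeared and you had to abandon the computation rather than finish it.

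One further point: Corollary~2.1.1 only yields \emph{right} nondegeneracy, and your sketch for left nondegeneracy (``each composite is a composition of bijections'') does not work as stated, since in $\mathfrak{R}_b(a)=\gamma(\gamma(a)(b))^{-1}(a)$ the inner argument $\gamma(a)(b)$ depends on $a$. Left nondegeneracy genuinely uses that $(A,*)$ is a \emph{group} (Proposition~3.1), not merely a right quasigroup; the paper does not supply this argument either and simply attributes full nondegeneracy to Rump.
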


Next we introduce a dynamical brace that is the generalization of a brace.
\begin{Def}
{\rm Let $H$ be a non-empty set, $A=(A,+)$ an abelian group with the family of multiplications $\{ \cdot_{\lambda}:A\times A\rightarrow A \}_{\lambda\in H}$ and $\phi$ a map from $H\times A$ to $H$. We call $(A,H,\phi;+,\{ \cdot_{\lambda} \}_{\lambda \in H})$ a dynamical brace(d-brace) if the following conditions are satisfied for all $(\lambda,a,b,c) \in H\times A\times A\times A$. 
\begin{enumerate}
  \item $(a+b)\cdot_{\lambda} c=a\cdot_{\lambda} c +b\cdot_{\lambda} c$ \quad (Right distributive law),
  \item $a\cdot_{\lambda} (b\cdot_{\lambda} c+b+c)= (a\cdot_{\phi(\lambda,c)} b)\cdot_{\lambda} c+a\cdot_{\phi(\lambda,c)} b+a\cdot_{\lambda} c$,
  \item The map $\gamma_{\lambda}(b) :a \mapsto a\cdot_{\lambda} b+a $ is bijective.
\end{enumerate} }
\end{Def}

\begin{Def}
{\rm $(Q,\cdot)$ is a right quasigroup if and only if $Q$ is a non-empty set with a binary operation $(\cdot )$ having the property below for all $a\in Q$
\[
  R(a):Q\rightarrow Q , b\mapsto b\cdot a \quad \mbox{is bijective}.  
\]
A left quasigroups are similarly defined. And a non-empty set $Q$ with left and right quasigroup structure is called simply quasigroup[8]. }
\end{Def}

\begin{Prop}
{\rm Let $H$ be a non-empty set, $A=(A,+)$ an abelian group with a family of right distributive multiplications $\{\cdot_{\lambda}:A\times A\rightarrow A \}_{\lambda\in H}$ and $\phi$ a map from $H\times A$ to $H$. Then $(A,H,\phi;+,\{\cdot_{\lambda} \}_{\lambda\in H})$ is a d-brace if and only if $A$ is a right quasigroup with respect to operations 
\begin{equation}
a*_{\lambda}b:=a\cdot_{\lambda}b+a+b,
\end{equation}
 and satisfies the next condition for all $(\lambda,a,b,c) \in H\times A\times A\times A$
\begin{equation}
  (a*_{\phi(\lambda,c)}b)*_{\lambda}c= a*_{\lambda} (b*_{\lambda}c).
\end{equation} }
\end{Prop}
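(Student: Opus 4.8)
The plan is to verify the equivalence by a direct expansion, using throughout the standing hypothesis that every multiplication $\cdot_{\lambda}$ is right distributive. There are exactly two things to match up: condition (3) of Definition 3.2 (bijectivity of $\gamma_{\lambda}(b)$) with the right quasigroup property of the operations $*_{\lambda}$, and condition (2) of Definition 3.2 with the identity $(a*_{\phi(\lambda,c)}b)*_{\lambda}c = a*_{\lambda}(b*_{\lambda}c)$.

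First I would treat the quasigroup part, which needs only the abelian group structure of $(A,+)$. For fixed $\lambda$ and $c$ the map $b\mapsto b*_{\lambda}c = b\cdot_{\lambda}c+b+c$ is the composite of $\gamma_{\lambda}(c)\colon b\mapsto b\cdot_{\lambda}c+b$ with the translation $x\mapsto x+c$. Translations of $(A,+)$ are bijections, so $b\mapsto b*_{\lambda}c$ is bijective for every $c\in A$ --- i.e. $(A,*_{\lambda})$ is a right quasigroup in the sense of Definition 3.3 --- if and only if $\gamma_{\lambda}(c)$ is bijective for every $c\in A$, which is condition (3).

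Next I would expand both sides of $(a*_{\phi(\lambda,c)}b)*_{\lambda}c = a*_{\lambda}(b*_{\lambda}c)$. Unravelling the right-hand side from the definition of $*_{\lambda}$ gives $a\cdot_{\lambda}(b\cdot_{\lambda}c+b+c)+a+b\cdot_{\lambda}c+b+c$. For the left-hand side I first substitute $a*_{\phi(\lambda,c)}b = a\cdot_{\phi(\lambda,c)}b+a+b$ and then apply the right distributive law of $\cdot_{\lambda}$ --- the outer product is $\cdot_{\lambda}$, so it is its distributivity that enters --- to get $(a\cdot_{\phi(\lambda,c)}b)\cdot_{\lambda}c + a\cdot_{\lambda}c + b\cdot_{\lambda}c + a\cdot_{\phi(\lambda,c)}b + a + b + c$. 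Equating the two and cancelling the common summands $a$, $b$, $c$ and $b\cdot_{\lambda}c$ (legitimate since $(A,+)$ is an abelian group) leaves exactly $(a\cdot_{\phi(\lambda,c)}b)\cdot_{\lambda}c + a\cdot_{\phi(\lambda,c)}b + a\cdot_{\lambda}c = a\cdot_{\lambda}(b\cdot_{\lambda}c+b+c)$, which is condition (2). Hence, given right distributivity, the identity holds for all $(\lambda,a,b,c)$ precisely when condition (2) does.

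Putting the two equivalences together proves the proposition: a right distributive family $\{\cdot_{\lambda}\}_{\lambda\in H}$ satisfies conditions (2) and (3) of Definition 3.2 if and only if $(A,*_{\lambda})$ is a right quasigroup for every $\lambda$ and the identity $(a*_{\phi(\lambda,c)}b)*_{\lambda}c = a*_{\lambda}(b*_{\lambda}c)$ holds identically. The computation is routine; the only point demanding care --- and the nearest thing to an obstacle --- is bookkeeping the dynamical parameters in the left-hand side expansion, where the inner product carries the parameter $\phi(\lambda,c)$ while the outer product, and the distributive law applied to it, carry $\lambda$; conflating the two would break the argument.
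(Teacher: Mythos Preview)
Your proof is correct and follows exactly the same approach as the paper: the paper also observes that $a\mapsto a*_{\lambda}b=\gamma_{\lambda}(b)(a)+b$ is bijective iff $\gamma_{\lambda}(b)$ is, and states that relation (14) is equivalent to condition (2) given right distributivity. Your version simply makes the expansion explicit where the paper leaves it to the reader.
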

\begin{proof}
{\rm 1. Let $(A,H,\phi;+,\{\cdot_{\lambda}\}_{\lambda \in H})$ be a d-brace. 
Consider maps $R_{\lambda}(b):a\mapsto a*_{\lambda}b=a\cdot_{\lambda}b+a+b=\gamma_{\lambda}(b)(a)+b$ \ (for all $b\in A$).
Because of bijectivity of $\gamma_{\lambda}(b)$, $R_{\lambda}(b)$ is bijection. Hence $(A,*_{\lambda})$ is right quasigroup. \\
And the relation (14) follows from conditions 1 and 2 of d-brace. \\
2. Suppose that $A$ satisfies the conditions of proposition.
Then the relation (14) implies condition 2 of Definition 3.2,
and bijectivity of $\gamma_{\lambda}(b)$ follows from a right quasigroup structure of $(A,*_{\lambda})$ \qed}
\end{proof}

Note that 
\begin{equation}
 a*_{\lambda}b=a*_{\mu}b \iff a\cdot_{\lambda}b=a\cdot_{\mu}b 
\end{equation}
for all $(\lambda,\mu,a,b)\in H\times H\times A\times A$.

\begin{Prop}
{\rm Let $(A,H,\phi;+,\{\cdot_{\lambda}\}_{\lambda\in H})$ be a d-brace. Then 
\begin{equation}
 \cdot_{\phi(\phi(\lambda,a),b)} = \cdot_{\phi(\lambda, b*_{\lambda}a)}
\end{equation}
as a map from $A\times A$ to $A$, for all $(\lambda,a,b)\in H\times A\times A$.}
\end{Prop}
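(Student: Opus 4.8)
The plan is to deduce the identity from the ``shifted associativity'' relation (14) of Proposition 3.3, exploiting the right quasigroup cancellation it supplies, and then to transfer the conclusion from the operations $*_{\lambda}$ back to the multiplications $\cdot_{\lambda}$.

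First I would observe that, by the equivalence (15) (which says $a *_{\lambda} b = a *_{\mu} b$ holds at a given pair $(a,b)$ exactly when $a \cdot_{\lambda} b = a \cdot_{\mu} b$ does), it is enough to prove
\[
 x *_{\phi(\phi(\lambda,a),b)} y \;=\; x *_{\phi(\lambda,\, b *_{\lambda} a)} y \qquad \mbox{for every } (\lambda,a,b,x,y) \in H \times A^{4}.
\]
Thus the statement is reduced to an identity among the right quasigroup operations $*_{\mu}$, to be extracted from the single relation $p *_{\lambda}(q *_{\lambda} r) = (p *_{\phi(\lambda,r)} q) *_{\lambda} r$ of Proposition 3.3 (i.e.\ (14) read from right to left).

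The core step is to expand the ``length-four'' product
\[
 T \;:=\; x *_{\lambda}\bigl( y *_{\lambda} ( b *_{\lambda} a ) \bigr)
\]
in two different ways. Peeling the outermost pair first gives $T = \bigl( x *_{\phi(\lambda,\, b *_{\lambda} a)} y \bigr) *_{\lambda} ( b *_{\lambda} a )$, and a further application of (14) (with parameter $\lambda$ and second variable $a$) yields
\[
 T \;=\; \Bigl( \bigl( x *_{\phi(\lambda,\, b *_{\lambda} a)} y \bigr) *_{\phi(\lambda,a)} b \Bigr) *_{\lambda} a .
\]
Peeling the innermost pair first gives $T = x *_{\lambda}\bigl( (y *_{\phi(\lambda,a)} b) *_{\lambda} a \bigr)$; applying (14) once more (parameter $\lambda$, second variable $a$) produces $T = \bigl( x *_{\phi(\lambda,a)} ( y *_{\phi(\lambda,a)} b ) \bigr) *_{\lambda} a$, and a last application (parameter $\phi(\lambda,a)$, second variable $b$) gives
\[
 T \;=\; \Bigl( \bigl( x *_{\phi(\phi(\lambda,a),b)} y \bigr) *_{\phi(\lambda,a)} b \Bigr) *_{\lambda} a .
\]
Both routes therefore terminate at an expression of the common shape $\bigl( (\,\cdot\,) *_{\phi(\lambda,a)} b \bigr) *_{\lambda} a$.

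Finally I would equate the two expressions for $T$ and use that $(A, *_{\mu})$ is a right quasigroup for every $\mu$ (Proposition 3.3), so that $t \mapsto t *_{\mu} s$ is a bijection for each $s \in A$: cancelling first the outermost operation $(\,\cdot\,) *_{\lambda} a$ and then $(\,\cdot\,) *_{\phi(\lambda,a)} b$ leaves precisely $x *_{\phi(\phi(\lambda,a),b)} y = x *_{\phi(\lambda,\, b *_{\lambda} a)} y$; converting back through (15) gives $\cdot_{\phi(\phi(\lambda,a),b)} = \cdot_{\phi(\lambda,\, b *_{\lambda} a)}$ as maps $A \times A \to A$. I do not anticipate a serious obstacle: the only delicate point is the bookkeeping of the dynamical parameters while expanding $T$ --- each use of (14) must be invoked at the correct parameter ($\lambda$ or $\phi(\lambda,a)$) and always in the ``associate to the right'' direction --- after which the two right quasigroup cancellations finish the proof immediately.
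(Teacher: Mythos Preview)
Your proof is correct and follows essentially the same route as the paper: repeated use of the shifted associativity (14) together with right-quasigroup cancellation. The only cosmetic difference is the pivot point---the paper runs the chain from $(d*_{\phi(\phi(\lambda,a),b)}c)*_{\lambda}(b*_{\lambda}a)$ straight through to $(d*_{\phi(\lambda,b*_{\lambda}a)}c)*_{\lambda}(b*_{\lambda}a)$ and cancels a single right translation by $b*_{\lambda}a$, whereas you start from the ``middle'' expression $T=x*_{\lambda}(y*_{\lambda}(b*_{\lambda}a))$ and expand outward in both directions, ending with two cancellations; the underlying calculation is the same.
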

\begin{proof}
{\rm It follows from the next calculate
\begin{eqnarray*}
(d*_{\phi(\phi(\lambda,a),b)}c)*_{\lambda} (b*_{\lambda}a)
 &=& \{(d*_{\phi(\phi(\lambda,a),b)}c)*_{\phi(\lambda,a)}b\} *_{\lambda}a \\
 &=& \{d*_{\phi(\lambda,a)}(c*_{\phi(\lambda,a)}b)\} *_{\lambda}a \\
 &=& d*_{\lambda} \{c*_{\lambda}(b*_{\lambda}a)\} \\
 &=& (d*_{\phi(\lambda,b*_{\lambda}a)}c)*_{\lambda}(b_{*_{\lambda}}a).
\end{eqnarray*}
Therefore we obtain $d*_{\phi(\phi(\lambda,a),b)}c= d*_{\phi(\lambda,b*_{\lambda}a)}c$. \qed}
\end{proof}

\begin{Prop}
{\rm
Let $(A,H,\phi;+,\{\cdot_{\lambda}\}_{\lambda\in H})$ be a d-brace and $0$ the unit of the abelian group $(A,+)$. Then 
\begin{enumerate}
\item $0\cdot_{\lambda} a=0$ 
\item $a\cdot_{\phi(\lambda,0)} 0=0$  (for all $(\lambda,a) \in H\times A$).
\end{enumerate}}
\end{Prop}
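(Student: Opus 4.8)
The proof of Proposition 3.5 will parallel the proof of the analogous fact for braces (Proposition 3.2), adapting the argument to carry the dynamical parameter.

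\medskip

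\textbf{Part 1: $0\cdot_{\lambda}a=0$.}
This is immediate from the right distributive law (condition 1 of Definition 3.2): since $0+0=0$, we get $0\cdot_{\lambda}a=(0+0)\cdot_{\lambda}a=0\cdot_{\lambda}a+0\cdot_{\lambda}a$ in the abelian group $(A,+)$, and cancelling yields $0\cdot_{\lambda}a=0$ for all $(\lambda,a)\in H\times A$.

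\medskip

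\textbf{Part 2: $a\cdot_{\phi(\lambda,0)}0=0$.}
The plan is to exploit condition 2 of Definition 3.2 specialized to a suitable choice of the three elements, together with the bijectivity of the map $\gamma_{\lambda}(0)$ (condition 3), exactly as in the proof of Proposition 3.2. Setting $b=c=0$ in condition 2 and using $0\cdot_{\mu}0=0$ (a special case of Part 1) on the left-hand side, the left side collapses to $a\cdot_{\phi(\lambda,0)}0$ — wait, I must be careful with which parameter appears where. Taking $b=c=0$ in condition 2 gives
\[
a\cdot_{\lambda}(0\cdot_{\lambda}0+0+0)=(a\cdot_{\phi(\lambda,0)}0)\cdot_{\lambda}0+a\cdot_{\phi(\lambda,0)}0+a\cdot_{\lambda}0,
\]
so by Part 1 the left side is $a\cdot_{\lambda}0$, and the identity reads $a\cdot_{\lambda}0=(a\cdot_{\phi(\lambda,0)}0)\cdot_{\lambda}0+a\cdot_{\phi(\lambda,0)}0+a\cdot_{\lambda}0$, i.e.\ $(a\cdot_{\phi(\lambda,0)}0)\cdot_{\lambda}0+a\cdot_{\phi(\lambda,0)}0=0$. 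The left-hand side is precisely $\gamma_{\lambda}(0)\bigl(a\cdot_{\phi(\lambda,0)}0\bigr)$, while $0=0\cdot_{\lambda}0+0=\gamma_{\lambda}(0)(0)$. Since $\gamma_{\lambda}(0)$ is bijective, we conclude $a\cdot_{\phi(\lambda,0)}0=0$.

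\medskip

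I do not expect any serious obstacle here: both parts are short formal manipulations in the abelian group, and the only subtlety is bookkeeping the dynamical parameters — noting that condition 2 forces the parameter $\phi(\lambda,0)$ on the ``inner'' product $a\cdot_{?}0$, which is exactly why the statement is phrased as $a\cdot_{\phi(\lambda,0)}0=0$ rather than $a\cdot_{\lambda}0=0$. The one point worth double-checking is that the element whose image under $\gamma_{\lambda}(0)$ we are comparing with $0$ is indeed $a\cdot_{\phi(\lambda,0)}0$ and not some other product, which the specialization above confirms.
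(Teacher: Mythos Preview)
Your proof is correct and follows essentially the same route as the paper: Part~1 is the trivial consequence of right distributivity, and Part~2 specializes condition~2 at $b=c=0$, cancels $a\cdot_{\lambda}0$, and then uses the bijectivity of $\gamma_{\lambda}(0)$ to conclude $a\cdot_{\phi(\lambda,0)}0=0$. The only cosmetic difference is that the paper phrases the cancellation step as comparing $\gamma_{\lambda}(0)(a\cdot_{\phi(\lambda,0)}0)$ with $\gamma_{\lambda}(0)(0)$ directly, whereas you first subtract $a\cdot_{\lambda}0$ in the abelian group; the content is identical.
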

\begin{proof}
{\rm 1. $0\cdot_{\lambda}a=0$ is trivial. \\
2. $a\cdot_{\lambda}0=a\cdot_{\lambda}(0\cdot_{\lambda}0+0+0)=(a\cdot_{\phi(\lambda,0)}0)\cdot_{\lambda}0 +a\cdot_{\phi(\lambda,0)}0 +a\cdot_{\lambda}0$, hence $\gamma_{\lambda}(0)(a\cdot_{\phi(\lambda,0)}0)= (a\cdot_{\phi(\lambda,0)}0)\cdot_{\lambda}0 +a\cdot_{\phi(\lambda,0)}0=0=0\cdot_{\lambda}0+0=\gamma_{\lambda}(0)(0)$. Therefore we obtain $a\cdot_{\phi(\lambda,0)}0=0$ by using bijectivity of $\gamma_{\lambda}(0)$. \qed}
\end{proof}

In generally d-brace has a multiplication $\cdot_{\mu}$ that satisfies 
$a\cdot_{\mu}0 \not=0$ for all $a\in A$
(see Example 5.2).

\begin{Def}
{\rm \begin{enumerate}
\item Let $(A,H,\phi;+,\{\cdot_{\lambda} \}_{\lambda\in H})$ be a d-brace. If a multiplication $\cdot_{\lambda}$ satisfies $a\cdot_{\lambda}0= 0\cdot_{\lambda}a=0$, we call $\cdot_{\lambda}$ zero-symmetry. And if all multiplications of d-brace are zero-symmetry, we call d-brace is zero-symmetry.
\item Let $(A,H,\phi;+,\{\cdot_{\lambda} \}_{\lambda \in H})$ be a d-brace and $K$ a subset of $H$. \\If $(A,K,\phi|_{K\times A};+,\{\cdot_{\lambda} \}_{\lambda\in K})$ is again d-brace, we call it restricted d-brace.

\item Two d-braces $(A,H,\phi;+,\{\cdot_{\lambda} \}_{\lambda \in H} )$ and $(A^{'},H^{'},\phi^{'};+,\{\cdot_{\lambda^{'}} \}_{\lambda^{'} \in H^{'}} )$ are isomorphic if and only if there are bijections $F:A\rightarrow A^{'}$ , $p:H\rightarrow H^{'}$ such that

\begin{enumerate}
\item $F(a+b) = F(a)+F(b)$
\item $F(a\cdot_{\lambda} b) = F(a)\cdot_{p(\lambda)} F(b)$
\item $p \phi = \phi^{'} (p\times F)$ 
\end{enumerate}
(for all $ (\lambda,a,b) \in H\times A\times A$). 
\end{enumerate} }
\end{Def}

Let us reconsider Corollary 2.1.1 stated in Section 2.
Suppose that $A=(A,+)$ is an abelian group, $H$ a non-empty set and $\phi$ a map from $H\times A$ to $H$.
To obtain DYB map associated $A,H,\phi$, we need to construct maps $\mathfrak{L}^{\lambda}_{a}:A\rightarrow A$ that satisfies $\mathfrak{L}_{a}^{\lambda} \cdot \mathfrak{L}_{b}^{\phi(\lambda ,a)}=\mathfrak{L}_{\mathfrak{L}_{a}^{\lambda}(b)+a}^{\lambda}$ (for all $(\lambda,a,b) \in H\times A\times A$). 
The next theorem states a relation between d-braces and DYB maps.
\begin{theorem}
{\rm Let $A=(A,+)$ be an abelian group, $H$ a non-empty set and $\phi$ a map from $H\times A$ to $H$.
\begin{enumerate}
\item Let $(A,H,\phi;+,\{\cdot_{\lambda} \}_{\lambda\in H})$ be a d-brace, then $\{ \mathfrak{L}^{\lambda}_{a}:=\gamma_{\lambda}(a) :A\rightarrow A \}_{(\lambda,a)\in H\times A}$ is a family of isomorphisms of the abelian group $(A,+)$ that satisfies $\mathfrak{L}_{a}^{\lambda} \cdot \mathfrak{L}_{b}^{\phi(\lambda ,a)}=\mathfrak{L}_{\mathfrak{L}_{a}^{\lambda}(b)+a}^{\lambda}$ (for all $(\lambda,a,b) \in H\times A\times A$).
\item Let $\{ \mathfrak{L}^{\lambda}_{a}:A\rightarrow A \}_{(\lambda,a)\in H\times A}$ be a family of isomorphisms of the abelian group $(A,+)$ that satisfies $\mathfrak{L}_{a}^{\lambda} \cdot \mathfrak{L}_{b}^{\phi(\lambda ,a)}=\mathfrak{L}_{\mathfrak{L}_{a}^{\lambda}(b)+a}^{\lambda}$. Define multiplications on $A$ by $a\cdot_{\lambda}b:=\mathfrak{L}^{\lambda}_{b}(a)-a$, then $(A,H,\phi;+,\{\cdot_{\lambda} \}_{\lambda\in H})$ is a d-brace (for all $(\lambda,a,b) \in H\times A\times A$).
\item The correspondence between 1 and 2 is one-to-one.
\end{enumerate}}
\end{theorem}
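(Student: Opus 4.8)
The plan is to prove the three assertions in turn; all of them are elementary consequences of right distributivity together with the abelian group law. Throughout I write $\gamma_{\lambda}(a)$ for the map $b\mapsto b\cdot_{\lambda}a+b$, so that the family produced in part 1 is $\mathfrak{L}^{\lambda}_{a}=\gamma_{\lambda}(a)$.

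For part 1, I would first observe that axiom 1 of Definition 3.2 says precisely that $\gamma_{\lambda}(a)$ is additive, since $\gamma_{\lambda}(a)(b+c)=(b+c)\cdot_{\lambda}a+b+c=b\cdot_{\lambda}a+b+c\cdot_{\lambda}a+c=\gamma_{\lambda}(a)(b)+\gamma_{\lambda}(a)(c)$, while axiom 3 gives its bijectivity; hence $\mathfrak{L}^{\lambda}_{a}\in Aut(A)$. It then remains to check $\mathfrak{L}_{a}^{\lambda}\mathfrak{L}_{b}^{\phi(\lambda,a)}=\mathfrak{L}_{\mathfrak{L}_{a}^{\lambda}(b)+a}^{\lambda}$, which I would do by evaluating both sides at an arbitrary $c\in A$. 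On the left, additivity gives $\mathfrak{L}_{a}^{\lambda}\mathfrak{L}_{b}^{\phi(\lambda,a)}(c)=\mathfrak{L}_{a}^{\lambda}(c\cdot_{\phi(\lambda,a)}b+c)=(c\cdot_{\phi(\lambda,a)}b)\cdot_{\lambda}a+c\cdot_{\lambda}a+c\cdot_{\phi(\lambda,a)}b+c$. On the right, since $\mathfrak{L}_{a}^{\lambda}(b)+a=b\cdot_{\lambda}a+b+a=b*_{\lambda}a$, we have $\mathfrak{L}_{b*_{\lambda}a}^{\lambda}(c)=c\cdot_{\lambda}(b\cdot_{\lambda}a+b+a)+c$, and axiom 2 of Definition 3.2 applied with $(c,b,a)$ in place of $(a,b,c)$ rewrites $c\cdot_{\lambda}(b\cdot_{\lambda}a+b+a)$ as $(c\cdot_{\phi(\lambda,a)}b)\cdot_{\lambda}a+c\cdot_{\phi(\lambda,a)}b+c\cdot_{\lambda}a$. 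The two results coincide in $(A,+)$, as required.

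For part 2, put $a\cdot_{\lambda}b:=\mathfrak{L}^{\lambda}_{b}(a)-a$. Right distributivity is immediate from additivity of $\mathfrak{L}^{\lambda}_{b}$, and axiom 3 holds because $\gamma_{\lambda}(b)(a)=a\cdot_{\lambda}b+a=\mathfrak{L}^{\lambda}_{b}(a)$ is bijective by hypothesis. For axiom 2, I would use $b\cdot_{\lambda}c+b+c=\mathfrak{L}^{\lambda}_{c}(b)+c$: the left-hand side becomes $a\cdot_{\lambda}(\mathfrak{L}^{\lambda}_{c}(b)+c)=\mathfrak{L}^{\lambda}_{\mathfrak{L}^{\lambda}_{c}(b)+c}(a)-a$, which by the given identity (with $a$ replaced by $c$) equals $\mathfrak{L}^{\lambda}_{c}\mathfrak{L}^{\phi(\lambda,c)}_{b}(a)-a$. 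For the right-hand side, set $e:=a\cdot_{\phi(\lambda,c)}b=\mathfrak{L}^{\phi(\lambda,c)}_{b}(a)-a$; then $(a\cdot_{\phi(\lambda,c)}b)\cdot_{\lambda}c+a\cdot_{\phi(\lambda,c)}b+a\cdot_{\lambda}c=(\mathfrak{L}^{\lambda}_{c}(e)-e)+e+(\mathfrak{L}^{\lambda}_{c}(a)-a)=\mathfrak{L}^{\lambda}_{c}(e+a)-a=\mathfrak{L}^{\lambda}_{c}\mathfrak{L}^{\phi(\lambda,c)}_{b}(a)-a$, using additivity and $e+a=\mathfrak{L}^{\phi(\lambda,c)}_{b}(a)$. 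Thus both sides agree, so $(A,H,\phi;+,\{\cdot_{\lambda}\}_{\lambda\in H})$ is a d-brace.

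For part 3, I would check that the constructions of parts 1 and 2 are mutually inverse. Starting from a d-brace, forming $\mathfrak{L}^{\lambda}_{b}=\gamma_{\lambda}(b)$ and then the new multiplication $a\cdot'_{\lambda}b:=\mathfrak{L}^{\lambda}_{b}(a)-a=(a\cdot_{\lambda}b+a)-a=a\cdot_{\lambda}b$ returns the original multiplications; conversely, starting from a family $\{\mathfrak{L}^{\lambda}_{a}\}$, forming $a\cdot_{\lambda}b:=\mathfrak{L}^{\lambda}_{b}(a)-a$ and then $\gamma_{\lambda}(a)(b)=b\cdot_{\lambda}a+b=\mathfrak{L}^{\lambda}_{a}(b)$ returns the original family. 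Hence the correspondence is one-to-one. None of the steps is deep; the only point requiring care---and the one I would display in full---is the index bookkeeping in part 1, which shows that axiom 2 of Definition 3.2 is nothing but the cocycle identity $\mathfrak{L}_{a}^{\lambda}\mathfrak{L}_{b}^{\phi(\lambda,a)}=\mathfrak{L}_{\mathfrak{L}_{a}^{\lambda}(b)+a}^{\lambda}$ under the identification $\mathfrak{L}^{\lambda}_{a}=\gamma_{\lambda}(a)$, once the placement of $\lambda$ versus $\phi(\lambda,a)$ and the order of the arguments are matched up correctly.
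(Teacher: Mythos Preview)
Your proof is correct and follows essentially the same approach as the paper's own argument: in part 1 you evaluate both sides at an arbitrary $c$, expand using right distributivity, and invoke axiom 2 with the roles of $a$ and $c$ swapped; in part 2 you reduce both sides of axiom 2 to $\mathfrak{L}^{\lambda}_{c}\mathfrak{L}^{\phi(\lambda,c)}_{b}(a)-a$ via the given cocycle identity and additivity---exactly as the paper does. Your part 3 is more explicit than the paper (which just writes ``Straightforward''), but the content is the same.
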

\begin{proof}
{\rm 1. We prove that $\{ \mathfrak{L}^{\lambda}_{a}:=\gamma_{\lambda}(a) :A\rightarrow A \}_{(\lambda,a)\in H\times A}$ a family of isomorphisms of abelian group and satisfies $\mathfrak{L}_{a}^{\lambda} \cdot \mathfrak{L}_{b}^{\phi(\lambda ,a)}=\mathfrak{L}_{\mathfrak{L}_{a}^{\lambda}(b)+a}^{\lambda}$ (for all $(\lambda,a,b) \in H\times A\times A$).

The bijectivity of $\mathfrak{L}^{\lambda}_{a}$ follows from the definition, and as a result of right distributivity of d-brace  $\mathfrak{L}^{\lambda}_{a}$ is isomorphism. 
The relation $\mathfrak{L}_{a}^{\lambda} \cdot \mathfrak{L}_{b}^{\phi(\lambda ,a)}=\mathfrak{L}_{\mathfrak{L}_{a}^{\lambda}(b)+a}^{\lambda}$ is proved as follows.
\begin{eqnarray*}
\mbox{LHS}
&=& \gamma_{\lambda}(a)\gamma_{\phi(\lambda,a)}(b)(c) \\
&=& (c\cdot_{\phi(\lambda,a)}b+c)\cdot_{\lambda} a+c\cdot_{\phi(\lambda,a)}b+c \\
&=& (c\cdot_{\phi(\lambda,a)}b)\cdot_{\lambda}a+c\cdot_{\lambda}a+c\cdot_{\phi(\lambda,a)}b+c \\
&=& c\cdot_{\lambda}(b\cdot_{\lambda}a+b+a)+c \\
&=& c\cdot_{\lambda}( \gamma_{\lambda}(a)(b)+a)+c \\
&=& \gamma_{\lambda}( \gamma_{\lambda}(a)(b)+a)(c) \\
&=& \mbox{RHS}. 
\end{eqnarray*}

2. We prove that $(A,H,\phi;+,\{\cdot_{\lambda}\}_{\lambda\in H})$ is d-brace.
By definition of $\mathfrak{L}^{\lambda}_{b}$, multiplication $\cdot_{\lambda}$ satisfies the right distributive law. And as a consequence of $\gamma_{\lambda}(b)(a):= a\cdot_{\lambda}b+a=\mathfrak{L}^{\lambda}_{b}(a)$, we obtain bijectivity of $\gamma_{\lambda}(b)$.
Lastly the relation $a\cdot_{\lambda} (b\cdot_{\lambda} c+b+c)= (a\cdot_{\phi(\lambda,c)} b)\cdot_{\lambda} c+a\cdot_{\phi(\lambda,c)} b+a\cdot_{\lambda} c$ is proved as follows
\begin{eqnarray*}
\mbox{LHS}
&=& a\cdot_{\lambda} (\mathfrak{L}^{\lambda}_{c}(b)+c )\\
&=& \mathfrak{L}^{\lambda}_{\mathfrak{L}^{\lambda}_{c}(b)+c  }(a)-a\\
&=& \mathfrak{L}^{\lambda}_{c}\mathfrak{L}^{\phi(\lambda,c)}_{b}(a)-a\\
&=& \mathfrak{L}_{c}^{\lambda}(a\cdot_{\phi(\lambda,c) }b+a )-a\\
&=& \mathfrak{L}^{\lambda}_{c}(a\cdot_{\phi(\lambda,c)}b )+ \mathfrak{L}^{\lambda}_{c}(a)-a\\
&=& \mbox{RHS}.
\end{eqnarray*}

3. Straightforward. \qed }
\end{proof}

\begin{Cor}
{\rm Let $(A,H,\phi;+,\{\cdot_{\lambda} \}_{\lambda \in H} )$ be a d-brace. Then maps $R(\lambda):A\times A\rightarrow A\times A(\lambda\in H)$ defined by
\begin{equation}
  R(\lambda)(a,b)= ( \mathfrak{R}^{\lambda}_{b}(a) ,  \mathfrak{L}_{a}^{\lambda}(b))
   := ( \gamma_{\lambda} (\gamma_{\lambda}(a)(b))^{-1}(a) ,  \gamma_{\lambda}(a)(b)  )  
\end{equation}
are right nondegenerate unitary DYB map associated with $A,H,\phi$.}
\end{Cor}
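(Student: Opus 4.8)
The plan is to deduce the statement directly from Corollary 2.1.1 by feeding it the maps produced by Theorem 3.2. First I would observe that the abelian group $(A,+)$ is in particular a non-empty set equipped with a commutative binary operation $+$, so it qualifies as an admissible input $X=(X,+)$ for Corollary 2.1.1. By part 1 of Theorem 3.2, the family $\{\mathfrak{L}^{\lambda}_{a}:=\gamma_{\lambda}(a):A\rightarrow A\}_{(\lambda,a)\in H\times A}$ consists of automorphisms of $(A,+)$ --- in particular bijections of $A$ --- and satisfies the identity
\[
\mathfrak{L}^{\lambda}_{a}\cdot\mathfrak{L}^{\phi(\lambda,a)}_{b}=\mathfrak{L}^{\lambda}_{\mathfrak{L}^{\lambda}_{a}(b)+a},
\]
which is exactly hypothesis (12) of Corollary 2.1.1.

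Having verified the hypotheses, I would then invoke Corollary 2.1.1: setting $\mathfrak{R}^{\lambda}_{b}(a):=(\mathfrak{L}^{\lambda}_{\mathfrak{L}^{\lambda}_{a}(b)})^{-1}(a)$, the map
\[
R(\lambda)(a,b)=(\mathfrak{R}^{\lambda}_{b}(a),\mathfrak{L}^{\lambda}_{a}(b))
\]
is a right nondegenerate unitary DYB map associated with $A,H,\phi$. Finally, substituting back $\mathfrak{L}^{\lambda}_{a}=\gamma_{\lambda}(a)$ rewrites this formula as
\[
R(\lambda)(a,b)=(\gamma_{\lambda}(\gamma_{\lambda}(a)(b))^{-1}(a),\ \gamma_{\lambda}(a)(b)),
\]
which is precisely the asserted expression (18), so the corollary follows.

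There is essentially no hard step left in the corollary itself: the substantive work --- deriving the functional equation, and the unitarity and right nondegeneracy conclusions --- has already been carried out in Theorem 2.1, Corollary 2.1.1 and Theorem 3.2. The only points that warrant a moment's attention are (i) checking that axiom 3 of the d-brace (bijectivity of $\gamma_{\lambda}(b)$) is exactly what makes each $\mathfrak{L}^{\lambda}_{a}$ bijective, as required by Corollary 2.1.1, and (ii) confirming that no associativity of $+$ is used in Corollary 2.1.1, so that the genuine abelian group structure on $A$ is more than sufficient. If anything could be singled out as an obstacle it would merely be the bookkeeping with the nested inverses $\gamma_{\lambda}(\gamma_{\lambda}(a)(b))^{-1}$, but this is handled abstractly at the level of Corollary 2.1.1 without ever expanding them.
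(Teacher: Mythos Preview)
Your proposal is correct and matches the paper's intended argument: the corollary is stated without proof in the paper precisely because it follows immediately by combining Theorem~3.2(1) with Corollary~2.1.1, exactly as you outline. There is nothing to add.
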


\begin{Prop}
{\rm Let $\{ \mathfrak{L}^{\lambda}_{a}:A\rightarrow A \}_{(\lambda,a)\in H\times A}$ be a family of isomorphisms of abelian group $A$ and satisfies
\[
\mathfrak{L}_{a}^{\lambda} \cdot \mathfrak{L}_{b}^{\phi(\lambda ,a)}=\mathfrak{L}_{\mathfrak{L}_{a}^{\lambda}(b)+a}^{\lambda}
  \ (\mbox{for all } (\lambda,a,b) \in H\times A\times A ).
\]
Then $\{ \mathfrak{L}^{\lambda}_{a}:A\rightarrow A \}_{(\lambda,a)\in H\times A}$ satisfies 
\[
 \mathfrak{L}^{\phi(\phi(\lambda,a),b)}_{c}= \mathfrak{L}^{\phi(\lambda,\mathfrak{L}^{\lambda}_{a}(b)+a)}_{c}
 \ (\mbox{for all } (\lambda,a,b) \in H\times A\times A ).
\] }
\end{Prop}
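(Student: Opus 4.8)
The plan is to deduce this from Proposition 3.4 by passing through the d-brace attached to the family $\{\mathfrak{L}^{\lambda}_{a}\}_{(\lambda,a)\in H\times A}$. By part~2 of Theorem 3.2, the hypotheses on $\{\mathfrak{L}^{\lambda}_{a}\}$ say precisely that $(A,H,\phi;+,\{\cdot_{\lambda}\}_{\lambda\in H})$ with $a\cdot_{\lambda}b:=\mathfrak{L}^{\lambda}_{b}(a)-a$ is a d-brace, and for this d-brace $\mathfrak{L}^{\lambda}_{b}=\gamma_{\lambda}(b)$; in particular $\mathfrak{L}^{\lambda}_{a}(b)=\gamma_{\lambda}(a)(b)=b\cdot_{\lambda}a+b$, so
\[
\mathfrak{L}^{\lambda}_{a}(b)+a \;=\; b\cdot_{\lambda}a+b+a \;=\; b*_{\lambda}a .
\]
Applying Proposition 3.4 to this d-brace gives $\cdot_{\phi(\phi(\lambda,a),b)}=\cdot_{\phi(\lambda,\,b*_{\lambda}a)}=\cdot_{\phi(\lambda,\,\mathfrak{L}^{\lambda}_{a}(b)+a)}$ as maps $A\times A\to A$. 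It remains only to translate this equality of multiplications back into the language of the $\mathfrak{L}$'s: since $\mathfrak{L}^{\mu}_{c}(x)=x\cdot_{\mu}c+x$ for every $\mu\in H$ and $c,x\in A$, an identity $\cdot_{\mu_{1}}=\cdot_{\mu_{2}}$ of maps $A\times A\to A$ forces $\mathfrak{L}^{\mu_{1}}_{c}=\mathfrak{L}^{\mu_{2}}_{c}$ as maps $A\to A$ for every $c$. Taking $\mu_{1}=\phi(\phi(\lambda,a),b)$ and $\mu_{2}=\phi(\lambda,\mathfrak{L}^{\lambda}_{a}(b)+a)$ gives the claim.

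Alternatively one can argue straight from the relation $\mathfrak{L}_{a}^{\lambda}\cdot\mathfrak{L}_{b}^{\phi(\lambda,a)}=\mathfrak{L}_{\mathfrak{L}_{a}^{\lambda}(b)+a}^{\lambda}$, without invoking d-braces. Writing $p:=\mathfrak{L}^{\lambda}_{a}(b)+a$, one evaluates the triple composite $\mathfrak{L}^{\lambda}_{a}\cdot\mathfrak{L}^{\phi(\lambda,a)}_{b}\cdot\mathfrak{L}^{\phi(\phi(\lambda,a),b)}_{c}$ in two ways. Grouping the first two factors and using the relation at $(\lambda,a,b)$ turns it into $\mathfrak{L}^{\lambda}_{p}\cdot\mathfrak{L}^{\phi(\phi(\lambda,a),b)}_{c}$; grouping the last two and using the relation at $(\phi(\lambda,a),b,c)$ and then at $(\lambda,a,\,\mathfrak{L}^{\phi(\lambda,a)}_{b}(c)+b)$ brings it into the normal form $\mathfrak{L}^{\lambda}_{(\cdot)}$. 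On the other hand the relation at $(\lambda,p,c)$ writes $\mathfrak{L}^{\lambda}_{p}\cdot\mathfrak{L}^{\phi(\lambda,p)}_{c}$ in the same normal form, and a short computation using additivity of $\mathfrak{L}^{\lambda}_{a}$ together with $\mathfrak{L}^{\lambda}_{a}\cdot\mathfrak{L}^{\phi(\lambda,a)}_{b}=\mathfrak{L}^{\lambda}_{p}$ identifies the two subscripts. One concludes $\mathfrak{L}^{\lambda}_{p}\cdot\mathfrak{L}^{\phi(\phi(\lambda,a),b)}_{c}=\mathfrak{L}^{\lambda}_{p}\cdot\mathfrak{L}^{\phi(\lambda,p)}_{c}$ and cancels the bijection $\mathfrak{L}^{\lambda}_{p}$ on the left.

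I expect the only real danger to be a bookkeeping trap, present in the direct route: it is tempting to apply the cocycle relation head-on to $\mathfrak{L}^{\lambda}_{p}\cdot\mathfrak{L}^{\phi(\phi(\lambda,a),b)}_{c}$, but this already presumes $\phi(\phi(\lambda,a),b)=\phi(\lambda,p)$, which is exactly the conclusion. The correct move is to reduce both composites to the common normal form $\mathfrak{L}^{\lambda}_{(\cdot)}$ using only legitimate instances of the relation and additivity of $\mathfrak{L}^{\lambda}_{a}$, and to compare subscripts and cancel only at the very end. In the d-brace route this same care sits inside the proof of Proposition 3.4, so that route is the cleaner one to write up.
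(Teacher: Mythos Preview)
Your proposal is correct. The ``alternative'' direct computation you sketch---evaluating the triple composite $\mathfrak{L}^{\lambda}_{a}\,\mathfrak{L}^{\phi(\lambda,a)}_{b}\,\mathfrak{L}^{\phi(\phi(\lambda,a),b)}_{c}$ by collapsing from the inside out, expanding by additivity, re-collapsing in the other order, and then cancelling $\mathfrak{L}^{\lambda}_{a}\,\mathfrak{L}^{\phi(\lambda,a)}_{b}$ on the left---is exactly the paper's proof, line for line.

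Your primary route, via Theorem~3.2(2) and Proposition~3.4, is a genuinely different (and slightly more conceptual) argument that the paper does not take. It is logically sound and non-circular: Proposition~3.4 is proved from the right-quasigroup identity $(a*_{\phi(\lambda,c)}b)*_{\lambda}c=a*_{\lambda}(b*_{\lambda}c)$, and Theorem~3.2(2) supplies precisely that identity for the d-brace built from the $\mathfrak{L}$'s; your translation $\mathfrak{L}^{\mu}_{c}(x)=x\cdot_{\mu}c+x$ then recovers the desired equality of $\mathfrak{L}$-maps from the equality of $\cdot_{\mu}$-maps. The trade-off is clear: the paper's direct computation is self-contained and uses only the single cocycle relation plus additivity, so it works at the level of $\mathfrak{L}$'s without ever naming a d-brace; your route makes the result an immediate corollary of Proposition~3.4 but requires having the d-brace dictionary already in hand. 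Either is a clean proof.
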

\begin{proof}
{\rm \begin{eqnarray*}
 \mathfrak{L}^{\lambda}_{a} (\mathfrak{L}^{\phi(\lambda,a)}_{b} \mathfrak{L}^{\phi(\phi(\lambda,a),b)}_{c} ) 
 &=& \mathfrak{L}^{\lambda}_{a} \mathfrak{L}^{\phi(\lambda,a)}_{\mathfrak{L}^{\phi(\lambda,a)}_{b}(c)+b}\\
 &=& \mathfrak{L}^{\lambda}_{\mathfrak{L}^{\lambda}_{a}(\mathfrak{L}^{\phi(\lambda,a)}_{b}(c)+b) +a}\\
 &=& \mathfrak{L}^{\lambda}_{\mathfrak{L}^{\lambda}_{a}\mathfrak{L}^{\phi(\lambda,a)}_{b}(c) + \mathfrak{L}^{\lambda}_{a}(b) +a}\\ 
 &=& \mathfrak{L}^{\lambda}_{\mathfrak{L}^{\lambda}_{\mathfrak{L}^{\lambda}_{a}(b)+a}(c) +\mathfrak{L}^{\lambda}_{a}(b) +a } \\
 &=& \mathfrak{L}^{\lambda}_{\mathfrak{L}^{\lambda}_{a}(b)+a}\mathfrak{L}^{\phi(\lambda,\mathfrak{L}^{\lambda}_{a}(b)+a)}_{c} \\
 &=& \mathfrak{L}^{\lambda}_{a} (\mathfrak{L}^{\phi(\lambda,a)}_{b} \mathfrak{L}^{\phi(\lambda,\mathfrak{L}^{\lambda}_{a}(b)+a) }_{c} )
\end{eqnarray*} \qed}
\end{proof}

As a consequence of this corollary, if the map $\mathfrak{L}:H \rightarrow Map(A\times A,A)$ is an injection, $\phi$ satisfies the weight-zero condition (for $\mathfrak{R}^{\lambda}_{a}(b):=(\mathfrak{L}^{\lambda}_{\mathfrak{L}^{\lambda}_{a}(b)})^{-1}(a)$).
\begin{Rem}
{\rm In generally, it seems that natural to assume the d-brace \\$(A,H,\phi;+,\{\cdot_{\lambda}\}_{\lambda\in H})$ satisfies the condition $\cdot_{\lambda}=\cdot_{\mu} \iff \lambda = \mu$ $(\lambda,\mu\in H)$. 
For this reason, the injectivity of $\mathfrak{L}$, with respect to parameter $H$, seems natural
(therefore the weight-zero condition also seems natural). }
\end{Rem}

The next theorem gives a relation between brace and d-brace structures over module(i.e., a relation between some YB maps and DYB maps). 

\begin{theorem}
{\rm Let $G$ be a group. suppose that $A=(A,+)$ is a $G$-module, moreover $(A,+,\cdot)$ is a brace. We denote an action of $\lambda$ by $f_{\lambda}$. Let $\phi$ be a map from $G\times A$ to $G$.
Define multiplication $\cdot_{\lambda} \ (\lambda \in G)$ over $A$ 
\begin{equation}
  a\cdot_{\lambda} b:=f_{\lambda}^{-1}(f_{\phi(\lambda,b)}(a) \cdot f_{\lambda}(b)+ f_{\phi(\lambda,b)}(a))-a 
\end{equation}
for all $a,b \in A$. 
Then $(A,G,\phi;+,\{ \cdot_{\lambda}  \}_{\lambda \in G} )$ is d-brace if and only if the map $\phi:G\times A\rightarrow G$ satisfies
\[
  f_{\phi(\lambda,b*_{\lambda} a)} =f_{\phi(\phi(\lambda,a),b)}  \quad (\mbox{for all } (\lambda,a,b) \in G\times A\times A).
\]
(Here multiplication $*_{\lambda} $ are defined by $ a*_{\lambda} b:=a\cdot_{\lambda} b+a+b $). }
\end{theorem}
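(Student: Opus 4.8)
\emph{Proof strategy.} The plan is to reduce the statement to the right-quasigroup characterisation of d-braces (Proposition 3.3) and then to transport the computation into the group $(A,*)$ attached to the brace $(A,+,\cdot)$ by Proposition 3.1. Write $\gamma$ for the map of the brace, $\gamma(c)(x)=x\cdot c+x$, which is an additive bijection of $(A,+)$. Then (18) reads $a\cdot_{\lambda}b=f_{\lambda}^{-1}\bigl(\gamma(f_{\lambda}(b))(f_{\phi(\lambda,b)}(a))\bigr)-a$, a composite of additive maps in $a$ minus the identity, so each $\cdot_{\lambda}$ is right distributive. Hence Proposition 3.3 applies: $(A,G,\phi;+,\{\cdot_{\lambda}\})$ is a d-brace if and only if every $(A,*_{\lambda})$ is a right quasigroup and
\[
(x*_{\phi(\lambda,c)}y)*_{\lambda}c=x*_{\lambda}(y*_{\lambda}c)\qquad\mbox{ for all }\lambda,x,y,c .
\]

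The one computational ingredient, read off directly from (18), is the transfer identity
\[
f_{\lambda}(x*_{\lambda}y)=f_{\phi(\lambda,y)}(x)*f_{\lambda}(y)\qquad(\lambda\in G,\ x,y\in A),
\]
where $*$ is the brace operation $u*v=u\cdot v+u+v$, which is a group operation on $A$ by Proposition 3.1. Indeed (18) gives $x*_{\lambda}y=x\cdot_{\lambda}y+x+y=f_{\lambda}^{-1}\bigl(f_{\phi(\lambda,y)}(x)\cdot f_{\lambda}(y)+f_{\phi(\lambda,y)}(x)\bigr)+y$, and applying the additive map $f_{\lambda}$ yields the formula. An immediate consequence is that $a\mapsto a*_{\lambda}b=f_{\lambda}^{-1}\bigl(f_{\phi(\lambda,b)}(a)*f_{\lambda}(b)\bigr)$ is a composite of three bijections, so $(A,*_{\lambda})$ is always a right quasigroup, independently of $\phi$. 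Thus only the associativity-type condition displayed above can fail.

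To analyse that condition I would expand both sides with the transfer identity applied twice, and use associativity of $(A,*)$:
\[
f_{\lambda}\bigl((x*_{\phi(\lambda,c)}y)*_{\lambda}c\bigr)=\bigl(f_{\phi(\phi(\lambda,c),y)}(x)*f_{\phi(\lambda,c)}(y)\bigr)*f_{\lambda}(c),
\]
\[
f_{\lambda}\bigl(x*_{\lambda}(y*_{\lambda}c)\bigr)=\bigl(f_{\phi(\lambda,\,y*_{\lambda}c)}(x)*f_{\phi(\lambda,c)}(y)\bigr)*f_{\lambda}(c).
\]
Since $f_{\lambda}$ is a bijection and $(A,*)$ is a group, the two sides of (14) agree for all $x$ precisely when, after right-cancelling $f_{\phi(\lambda,c)}(y)$ and then $f_{\lambda}(c)$, we have $f_{\phi(\phi(\lambda,c),y)}=f_{\phi(\lambda,\,y*_{\lambda}c)}$ as maps $A\to A$; relabelling $(c,y)$ as $(a,b)$ this is exactly $f_{\phi(\lambda,\,b*_{\lambda}a)}=f_{\phi(\phi(\lambda,a),b)}$. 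Quantifying over all $\lambda,a,b$ and combining with the automatic right-quasigroup property gives the asserted equivalence in both directions.

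The argument is thus essentially a change of variables followed by cancellation in the brace group $(A,*)$, and there is no serious obstacle. The only point needing care is that the equality in (14) must be used for every $x$ in order to pass from an identity between elements of $A$ to the identity between the operators $f_{\phi(\cdot)}$ — which is precisely where one needs $(A,*)$ to be a group rather than merely a right quasigroup, a fact supplied by the hypothesis that $(A,+,\cdot)$ is a brace.
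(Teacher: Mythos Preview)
Your proof is correct and follows essentially the same route as the paper: derive the transfer identity $a*_{\lambda}b=f_{\lambda}^{-1}\bigl(f_{\phi(\lambda,b)}(a)*f_{\lambda}(b)\bigr)$, observe that right distributivity and the right-quasigroup property hold automatically, and then expand both sides of (14) via two applications of the transfer identity together with associativity of $(A,*)$ to reduce the d-brace condition to the stated equality of operators. One trivial slip: the cancellation order should be $f_{\lambda}(c)$ first, then $f_{\phi(\lambda,c)}(y)$ (outermost first), but this does not affect the argument.
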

\begin{proof}
{\rm Using $a\cdot_{\lambda} b=f_{\lambda}^{-1}(f_{\phi(\lambda,b)}(a) \cdot f_{\lambda}(b)+ f_{\phi(\lambda,b)}(a))-a$ we can express operations $*_{\lambda}$ as follows
\[
a*_{\lambda}b=f_{\lambda}^{-1}(f_{\phi(\lambda,b)}(a)*f_{\lambda}(b))
\]
this multiplication satisfy right distributivity, and $(A,*_{\lambda})$ is a right quasigroup for all $\lambda \in H$.
To obtain theorem we need to see $(a*_{\phi(\lambda,c)}b)*_{\lambda}c= a*_{\lambda} (b*_{\lambda}c)$.
\begin{eqnarray*}
\mbox{LHS}
&=& f_{\lambda}^{-1} ( f_{\phi(\lambda,c)}(a*_{\phi(\lambda,c)}b)*f_{\lambda}(c)  )\\
&=& f_{\lambda}^{-1} ( (f_{\phi(\phi(\lambda,c),b)}(a)* f_{\phi(\lambda,c)}(b) ) *f_{\lambda}(c) )
\end{eqnarray*}
and
\begin{eqnarray*}
\mbox{RHS}
&=& f_{\lambda}^{-1} ( f_{\phi(\lambda,b*_{\lambda}c)}(a)* f_{\lambda}(b*_{\lambda}c) )\\
&=& f_{\lambda}^{-1} ( f_{\phi(\lambda,b*_{\lambda}c)}(a)* (f_{\phi(\lambda,c)}(b) *f_{\lambda}(c))  )
\end{eqnarray*}
hence we obtain theorem by compare of LHS and RHS. \qed}
\end{proof}

\begin{Rem}
{\rm If an action of group $G$ is faithful, a map $\phi$ satisfies 
\begin{equation}
  \phi(\lambda,b*_{\lambda} a)=\phi(\phi(\lambda,a),b)  \quad ((\lambda,a,b)\in G\times A\times A)
\end{equation}
this condition corresponds to the weight-zero condition in DYB map.}
\end{Rem}

\begin{Exa}
{\rm Let $(F,+,\times)$ be any field with a trivial brace structure $\cdot$ . Define an action of $a\in F$ by $f_{a}(b):=a^{2}b$
and define $\phi:F\times F\rightarrow F$ by $\phi(a,b):=f_{a}(b)+a=a(ab+1)$. 
From Theorem 3.3 we obtain $a\cdot_{b} c=\{(bc+1)^{2}-1 \}a $.
Then $\phi$ satisfies $\phi(a,b*_{a} c)=\phi(\phi(a,c),b)$ (i.e., weight-zero condition). 
Hence $(F,F,\phi;+,\{\cdot_{a} \}_{a \in F})$ is a d-brace. \\
The DYB map $R(a) \ (a\in F)$ associated with $F,F,\phi$ which corresponds to this d-brace is as follows.
\[
R(a)(b,c)=( \{a(ab+1)^{2}c+1 \}^{-1}b , (ab+1)^{2}c) 
\]
for all $a,b,c\in F$.}
\end{Exa}

\section{Combinatorial aspects of dynamical braces.}
We give a combinatorial approach to the d-brace, from this we obtain a description of d-brace as some family of subsets. 

\begin{theorem}
{\rm Let $(A,+)$ be an abelian group, $H$ a non-empty set.
\begin{enumerate}
 \item Let $(A,H,\phi;+,\{\cdot_{\lambda}\}_{\lambda\in H})$ be a d-brace. We set a family of subsets $\{S_{\lambda} \}_{\lambda\in H}$, $S_{\lambda}:=\{R_{\lambda}(a):A\rightarrow A, b\mapsto b*_{\lambda}a (a \in A)\} \subset A\rtimes Aut(A)$. Then $S_{\lambda}$ satisfies following conditions.
 \begin{enumerate}
     \item $\forall a\in A$ , $\exists! f \in Aut(A) $ s.t., $(a,f) \in S_{\lambda}$,
     \item $\forall (a,f)\in S_{\lambda}$ , $\exists! \mu \in H$ s.t., $(a,f)^{-1}S_{\lambda}= \{ (a,f)^{-1}(b,g)| (b,g)\in S_{\lambda} \} =S_{\mu} $.
 \end{enumerate}
 We denotes $f \in Aut(A)$ of condition (a) by $f_{\lambda}(a)$.  

\item Let $\{ S_{\lambda}\}_{\lambda \in H}$ be a family of subsets of $A\rtimes Aut(A)$ that satisfies conditions (a) and (b).
   Define multiplications $\{\cdot_{\lambda}\}_{\lambda\in H}$ on $A$ by $a\cdot_{\lambda}b:=f_{\lambda}(b)(a)-a$ and a map $\phi$ from $H\times A$ to $H$ by corresponds $(\lambda,a)$ to $\mu$ which determine in condition (b). Then $(A,H,\phi;+,\{\cdot_{\lambda}\}_{\lambda\in H})$ is d-brace.
\item The correspondence between 1 and 2 is one-to-one.
   
\end{enumerate}}
\end{theorem}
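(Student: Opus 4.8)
The plan is to read the statement through Theorem~3.2, which identifies a d-brace on $(A,+)$ with a family $\{\mathfrak{L}^{\lambda}_{a}\}_{(\lambda,a)\in H\times A}$ of automorphisms of $(A,+)$ satisfying $\mathfrak{L}_{a}^{\lambda}\mathfrak{L}_{b}^{\phi(\lambda ,a)}=\mathfrak{L}_{\mathfrak{L}_{a}^{\lambda}(b)+a}^{\lambda}$, where $\mathfrak{L}^{\lambda}_{a}=\gamma_{\lambda}(a)$. Throughout I write elements of $A\rtimes Aut(A)$ as pairs $(t,f)$ acting on $A$ by $b\mapsto f(b)+t$, so that $(t_{1},f_{1})(t_{2},f_{2})=(t_{1}+f_{1}(t_{2}),f_{1}f_{2})$ and $(t,f)^{-1}=(-f^{-1}(t),f^{-1})$. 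The key observation is that $R_{\lambda}(a)(b)=b*_{\lambda}a=b\cdot_{\lambda}a+b+a=\gamma_{\lambda}(a)(b)+a=\mathfrak{L}^{\lambda}_{a}(b)+a$, so $R_{\lambda}(a)$ is exactly the element $(a,\mathfrak{L}^{\lambda}_{a})$, and $S_{\lambda}$ is the graph $\{(a,\mathfrak{L}^{\lambda}_{a}):a\in A\}$ of the map $a\mapsto\mathfrak{L}^{\lambda}_{a}$.

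For part~1, condition (a) is then immediate, since the first-coordinate map $S_{\lambda}\to A$ is a bijection; one has $f_{\lambda}(a)=\mathfrak{L}^{\lambda}_{a}$. For (b) I would fix $(a,f)=(a,\mathfrak{L}^{\lambda}_{a})\in S_{\lambda}$ and compute, for arbitrary $c\in A$,
\[
 (a,f)^{-1}(c,\mathfrak{L}^{\lambda}_{c})=(f^{-1}(c-a),\ f^{-1}\mathfrak{L}^{\lambda}_{c}).
\]
Putting $b:=f^{-1}(c-a)$, that is $c=\mathfrak{L}^{\lambda}_{a}(b)+a$, the relation of Theorem~3.2 gives $\mathfrak{L}^{\lambda}_{c}=\mathfrak{L}^{\lambda}_{\mathfrak{L}^{\lambda}_{a}(b)+a}=\mathfrak{L}^{\lambda}_{a}\mathfrak{L}^{\phi(\lambda,a)}_{b}=f\,\mathfrak{L}^{\phi(\lambda,a)}_{b}$, hence $f^{-1}\mathfrak{L}^{\lambda}_{c}=\mathfrak{L}^{\phi(\lambda,a)}_{b}$ and $(a,f)^{-1}(c,\mathfrak{L}^{\lambda}_{c})=(b,\mathfrak{L}^{\phi(\lambda,a)}_{b})\in S_{\phi(\lambda,a)}$. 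Since $c\mapsto b=f^{-1}(c-a)$ is a bijection of $A$, this gives $(a,f)^{-1}S_{\lambda}=S_{\phi(\lambda,a)}$, so $\mu=\phi(\lambda,a)$ witnesses (b).

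For part~2, given $\{S_{\lambda}\}$ satisfying (a) and (b), let $f_{\lambda}:A\to Aut(A)$ be the function whose graph is $S_{\lambda}$, set $\mathfrak{L}^{\lambda}_{a}:=f_{\lambda}(a)$ (automatically an automorphism of $(A,+)$), and $\phi(\lambda,a):=\mu$ as in (b). Then $a\cdot_{\lambda}b=f_{\lambda}(b)(a)-a=\mathfrak{L}^{\lambda}_{b}(a)-a$, so by Theorem~3.2(2) it suffices to verify $\mathfrak{L}^{\lambda}_{a}\mathfrak{L}^{\phi(\lambda,a)}_{b}=\mathfrak{L}^{\lambda}_{\mathfrak{L}^{\lambda}_{a}(b)+a}$, and this comes out by running the previous computation backwards: condition (b) says $(a,\mathfrak{L}^{\lambda}_{a})^{-1}S_{\lambda}=S_{\phi(\lambda,a)}$, so for every $c\in A$ the pair $((\mathfrak{L}^{\lambda}_{a})^{-1}(c-a),\,(\mathfrak{L}^{\lambda}_{a})^{-1}\mathfrak{L}^{\lambda}_{c})$ lies on the graph of $f_{\phi(\lambda,a)}$, i.e.\ $(\mathfrak{L}^{\lambda}_{a})^{-1}\mathfrak{L}^{\lambda}_{c}=\mathfrak{L}^{\phi(\lambda,a)}_{(\mathfrak{L}^{\lambda}_{a})^{-1}(c-a)}$; substituting $c=\mathfrak{L}^{\lambda}_{a}(b)+a$ yields the required identity for all $b$, and Theorem~3.2(2) produces the d-brace.

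Part~3 is then formal: starting from a d-brace one recovers $f_{\lambda}(a)=\gamma_{\lambda}(a)$, the $\phi$ reconstructed in part~1 is the original $\phi$, and so $a\cdot_{\lambda}b=\gamma_{\lambda}(b)(a)-a$ is recovered; starting from a family $\{S_{\lambda}\}$, the d-brace produced has $\gamma_{\lambda}(a)=f_{\lambda}(a)$, so its associated graphs are again the $S_{\lambda}$ and its (b)-parameter is again $\phi$. \textbf{The step I expect to need care} is not any single computation but the uniqueness clause in (b) and the consequent well-definedness of $\phi$ in part~3: one has $S_{\mu}=S_{\mu'}$ if and only if $\cdot_{\mu}=\cdot_{\mu'}$, so the ``$\exists!\,\mu$'' in (b) (and hence a literal bijection in part~3) holds exactly when distinct dynamical parameters give distinct multiplications, the condition singled out in Remark~3.1 as the natural hypothesis. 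Under that assumption everything above is a clean bijection; otherwise part~3 should be read as a bijection between d-braces and families $\{S_{\lambda}\}$ modulo this identification. The remainder is routine bookkeeping with the semidirect-product multiplication.
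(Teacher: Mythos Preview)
Your argument is correct and in spirit matches the paper's: both identify $R_{\lambda}(a)$ with $(a,\gamma_{\lambda}(a))\in A\rtimes Aut(A)$ and then check that left translation by this element carries $S_{\lambda}$ to $S_{\phi(\lambda,a)}$. The one genuine difference is the auxiliary characterization invoked. The paper works through Proposition~3.3 (the right-quasigroup description via $*_{\lambda}$ and the associativity-type identity $(a*_{\phi(\lambda,c)}b)*_{\lambda}c=a*_{\lambda}(b*_{\lambda}c)$): for part~1 it verifies $R_{\lambda}(a)R_{\phi(\lambda,a)}(b)=R_{\lambda}(b*_{\lambda}a)$ directly from that identity, and for part~2 it multiplies $(c,f_{\lambda}(c))(b,f_{\phi(\lambda,c)}(b))$ and reads off both the quasigroup structure and the identity. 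You instead route everything through Theorem~3.2 (the $\mathfrak{L}$-description), reducing both directions to the single relation $\mathfrak{L}^{\lambda}_{a}\mathfrak{L}^{\phi(\lambda,a)}_{b}=\mathfrak{L}^{\lambda}_{\mathfrak{L}^{\lambda}_{a}(b)+a}$. The two routes are equivalent bookkeeping; yours has the advantage of making part~3 transparently a matter of graphs of functions. Your closing remark about the uniqueness clause in (b) is well taken: the paper's proof establishes existence of $\mu=\phi(\lambda,a)$ but silently assumes the injectivity of $\lambda\mapsto\cdot_{\lambda}$ (cf.\ Remark~3.1) for uniqueness, exactly as you diagnose.
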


\begin{proof}
{\rm 1. Because of $R_{\lambda}(a)(b)=b*_{\lambda}a=b\cdot_{\lambda}a+b+a=\gamma_{\lambda}(a)(b)+a$ and $\gamma_{\lambda}(a)\in Aut(A)$, we can regard $R_{\lambda}(a)$ as an action of $(a,\gamma_{\lambda}(a))$. Therefore $S_{\lambda}\subset A\rtimes Aut(A)$. 
Next we prove that $\{S_{\lambda} \}_{\lambda\in H}$ satisfies conditions (a) and (b).\\
1.1. Condition (a) follows from the definition of $S_{\lambda}$. \\
1.2. For $R_{\phi(\lambda,a)}(b) \in S_{\phi(\lambda,a)}$, we obtain $R_{\lambda}(a)R_{\phi(\lambda,a)}(b)(c) = (c*_{\phi(\lambda.a)}b)*_{\lambda}a = c*_{\lambda}(b*_{\lambda}a) = R_{\lambda}(R_{\lambda}(a)(b))(c)$ for all $b\in A$.
Because of $R_{\lambda}(a)$ is bijection, we obtain following an equality
\begin{eqnarray*}
R_{\lambda}(a) S_{\phi(\lambda,a)}
&=& \{R_{\lambda}(a)R_{\phi(\lambda,a)}(b)|b\in A\}\\
&=& \{R_{\lambda}(R_{\lambda}(a)(b))| b\in A \}\\
&=& S_{\lambda}. 
\end{eqnarray*}
2. We prove that $(A,*_{\lambda})$ is right quasigroup, and satisfies $(a*_{\phi(\lambda,c)}b)*_{\lambda}c= a*_{\lambda}(b*_{\lambda}c)$.\\
2.1. By definition of multiplications, we obtain $a*_{\lambda}b=f_{\lambda}(b)(a)+b$, therefore $*_{\lambda}$ is an action of $(b,f_{\lambda}(b))$. Hence $(A,*_{\lambda})$ is right quasigroup.\\
2.2. We prove that $(A,*_{\lambda})$ satisfies the relation (14).
 Take $(b,f_{\phi(\lambda,c)(b)}) \in S_{\phi(\lambda,c)}$, $(c,f_{\lambda}(c)) \in S_{\lambda}$, by definition of $\phi$
\[
 (c,f_{\lambda}(c))(b,f_{\phi(\lambda,c)}(b))= (c+f_{\lambda}(c)(b) , f_{\lambda}(c)f_{\phi(\lambda,c)}(b)) \in S_{\lambda}.
\]
Therefore $(c+f_{\lambda}(c)(b) , f_{\lambda}(c)f_{\phi(\lambda,c)}(b))= (c+f_{\lambda}(c)(b), f_{\lambda}( c+f_{\lambda}(c)(b) )  )$ by condition (a). From this we obtain (14) as follows 
\begin{eqnarray*}
  (a*_{\phi(\lambda,c)}b)*_{\lambda}c &=& f_{\lambda}(c)f_{\phi(\lambda,c)}(b)(a)+ f_{\lambda}(c)(b)+c\\
                                      &=& f_{\lambda}(f_{\lambda}(c)(b)+c)(a)+f_{\lambda}(c)(b)+c\\
                                      &=& a*_{\lambda}(b*_{\lambda}c). 
\end{eqnarray*}
3. Straightforward. \qed}
\end{proof}

A subgroup $S$ of $A\rtimes Aut(A)$ is said to be regular if, given any $a\in A$, then for each $b \in A$ there exists a unique $x\in S$ such that $x.a=b$, where . denotes an action of $S$. From this, we express regular subgroup $S=\{(a,f(a)) |a\in A \}$.
\begin{Cor}
{\rm Let $A=(A,+)$ be an abelian group.
\begin{enumerate}
 \item Let $(A,+,\cdot)$ be a brace, then $\{R(a):A\rightarrow A,b\mapsto b*a (a\in A) \}$ is a regular subgroup of $A\rtimes Aut(A)$.    
 \item Let $S$ be a regular subgroups $A\rtimes Aut(A)$. Define a multiplication on $A$ by $a\cdot b:=f(b)(a)-a$, then $(A,+,\cdot)$ is  a brace.  
 \item The correspondence between 1 and 2 is one-to-one.
\end{enumerate}}

\end{Cor}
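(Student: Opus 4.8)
The plan is to deduce this corollary from Theorem 4.1 by specialising the parameter set $H$ to a single point. First I would observe that when $H=\{*\}$ is a singleton the notion of a d-brace collapses to that of a brace: the map $\phi\colon H\times A\to H$ is then necessarily constant, so $\phi(\lambda,c)=*$ for every $c$, and consequently the three conditions of Definition~3.2 for the single multiplication $\cdot=\cdot_{*}$ become verbatim the three conditions of Definition~3.1 (in particular the subscript $\phi(\lambda,c)$ that appears in axiom (2) of a d-brace simply disappears). Conversely, a brace is exactly the data of a d-brace over a one-point $H$. In the same way, a family of subsets $\{S_{\lambda}\}_{\lambda\in H}$ indexed by a singleton is just one subset $S\subseteq A\rtimes Aut(A)$, and the claim I would establish is that conditions (a) and (b) of Theorem~4.1, in this case, say precisely that $S$ is a regular subgroup of $A\rtimes Aut(A)$ in the sense defined just before the corollary.

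Next I would unwind that claim. Condition (a) says that for each $a\in A$ there is a unique $f\in Aut(A)$ with $(a,f)\in S$; writing this automorphism as $f(a)$, we get $S=\{(a,f(a))\mid a\in A\}$. Since every automorphism fixes $0$, the action of $A\rtimes Aut(A)$ on $A$ carries $(a,f)$ to the map $b\mapsto f(b)+a$, so $(a,f).0=a$ and the orbit map $x\mapsto x.0$ on $S$ is the bijection $(a,f(a))\mapsto a$; translating, $x\mapsto x.c$ is a bijection $S\to A$ for every $c\in A$. Thus, once we know $S$ is a subgroup, condition (a) is exactly regularity. That $S$ is a subgroup is what condition (b) supplies: for $H=\{*\}$ it reads $(a,f)^{-1}S=S$ for every $(a,f)\in S$; taking $(a,f)=(b,g)$ gives $(0,id)\in S$, and then $(a,f)^{-1}(b,g)\in S$ for all $(a,f),(b,g)\in S$, which is the subgroup criterion. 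Conversely, if $S$ is a subgroup then $x^{-1}S=S$ for each $x\in S$, so (b) holds automatically. Hence $\{S\}$ satisfies (a) and (b) if and only if $S$ is a regular subgroup.

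With these two identifications in hand, the three assertions of the corollary follow directly from the three parts of Theorem~4.1 applied with $|H|=1$. Part~1 is Theorem~4.1(1): the brace $(A,+,\cdot)$ gives a d-brace over $\{*\}$, whose associated subset is $\{R(a):b\mapsto b*a\}=\{(a,\gamma(a))\mid a\in A\}$ (right distributivity makes each $\gamma(a)$ a homomorphism and axiom~(3) makes it bijective, hence an automorphism), and by the above this is a regular subgroup. Part~2 is Theorem~4.1(2): a regular subgroup $S=\{(a,f(a))\}$ satisfies (a) and (b), so the multiplication $a\cdot b:=f(b)(a)-a$ makes $A$ a d-brace over $\{*\}$, that is, a brace. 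Part~3 is Theorem~4.1(3). The only points needing genuine, if brief, verification are the collapse of Definition~3.2 to Definition~3.1 when $\phi$ is forced to be constant, and the equivalence of condition (b) with the subgroup property together with the matching of condition (a) to regularity of the action; neither constitutes a real obstacle, which is why this is stated as a corollary rather than a theorem.
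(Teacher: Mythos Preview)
Your proposal is correct and follows exactly the paper's own approach: the paper's proof consists of the single line ``A case of $\#(H)=1$'', and you have simply unpacked what that specialisation entails, verifying that a d-brace over a singleton $H$ is a brace and that conditions (a) and (b) of Theorem~4.1 for a single subset $S$ amount to $S$ being a regular subgroup. The only cosmetic point is that your ``translating'' step (showing $x\mapsto x.c$ is bijective for all $c$) already uses that $S$ is a subgroup, so it would read more cleanly placed after the subgroup verification rather than before it; the logic is nonetheless sound.
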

\begin{proof}
{\rm A case of $\#(H)=1$. \qed}
\end{proof}

\begin{Rem}
{\rm F.Catina and R.Frizz has shown a similar things of this corollary in [9,10].
(In [10] they called an algebra with brace structure a radical circle algebra). }
\end{Rem}

The next proposition is a correspondence of Proposition 3.4 and Proposition 3.6.
\begin{Prop}
{\rm Let $\{S_{\lambda} \}_{\lambda\in H}$ be a family of subsets of $A\rtimes Aut(A)$ and satisfies the conditions of Theorem 4.1. Then 
$\{S_{\lambda} \}_{\lambda\in H}$ satisfies    
\begin{description}
\item $S_{\phi(\phi(\lambda,a),b)} = S_{\phi(\lambda,f_{\lambda}(a)(b)+a )}$ (for all $(\lambda,a,b)\in H\times A\times A)$.
\end{description}}
\end{Prop}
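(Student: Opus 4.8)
The plan is to argue directly with the subsets $S_\lambda$, unwinding the definition of $\phi$ supplied by condition (b) of Theorem~4.1. By construction, $\phi(\lambda,a)$ is the unique element $\mu\in H$ for which $(a,f_\lambda(a))^{-1}S_\lambda = S_\mu$, where $(a,f_\lambda(a))$ denotes the unique element of $S_\lambda$ with first coordinate $a$ guaranteed by condition (a); in other words,
\[
 S_{\phi(\lambda,a)} = (a,f_\lambda(a))^{-1}S_\lambda \qquad \mbox{for all } (\lambda,a)\in H\times A.
\]

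The first step is to iterate this identity. Setting $\mu=\phi(\lambda,a)$, I obtain
\[
 S_{\phi(\phi(\lambda,a),b)} = (b,f_\mu(b))^{-1}S_\mu = (b,f_\mu(b))^{-1}(a,f_\lambda(a))^{-1}S_\lambda = \big((a,f_\lambda(a))(b,f_\mu(b))\big)^{-1}S_\lambda.
\]
The second step is to evaluate the product in $A\rtimes Aut(A)$. Since $(a,f)$ acts on $A$ by $x\mapsto f(x)+a$ (as used in the proof of Theorem~4.1), multiplication in $A\rtimes Aut(A)$ is $(a,f)(a',f')=(a+f(a'),\,f\circ f')$, so
\[
 (a,f_\lambda(a))(b,f_\mu(b)) = \big(a+f_\lambda(a)(b),\ f_\lambda(a)\circ f_\mu(b)\big).
\]
The third and key step is to observe that this element lies in $S_\lambda$: indeed $(b,f_\mu(b))\in S_\mu=(a,f_\lambda(a))^{-1}S_\lambda$, so multiplying on the left by $(a,f_\lambda(a))$ returns an element of $S_\lambda$. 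Its first coordinate being $a+f_\lambda(a)(b)$, the uniqueness in condition (a) forces its second coordinate to equal $f_\lambda\big(a+f_\lambda(a)(b)\big)$. Hence
\[
 S_{\phi(\phi(\lambda,a),b)} = \big(a+f_\lambda(a)(b),\ f_\lambda(a+f_\lambda(a)(b))\big)^{-1}S_\lambda = S_{\phi(\lambda,\,a+f_\lambda(a)(b))} = S_{\phi(\lambda,\,f_\lambda(a)(b)+a)},
\]
the last equality by commutativity of $+$, and this is the assertion.

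I expect no serious obstacle here: the argument is bookkeeping in the semidirect product together with the uniqueness clauses of conditions (a) and (b), paralleling the computation in part~2.2 of the proof of Theorem~4.1. The only place requiring a moment of care is the translation of the defining property of $\phi$ into the identity $S_{\phi(\lambda,a)}=(a,f_\lambda(a))^{-1}S_\lambda$, and recognizing at the end that the product element must be the unique member of $S_\lambda$ lying above $a+f_\lambda(a)(b)$, which pins down its automorphism component as $f_\lambda(a+f_\lambda(a)(b))$. Alternatively one could derive the statement from Proposition~3.4 (or Proposition~3.6) through the one-to-one correspondence of Theorem~4.1, using $b*_\lambda a = f_\lambda(a)(b)+a$ and the fact that $S_\nu$ is determined by $\cdot_\nu$ alone; but the direct computation above is shorter and self-contained.
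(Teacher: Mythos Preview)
Your proof is correct and follows essentially the same route as the paper: both start from the identity $S_{\phi(\lambda,a)}=(a,f_\lambda(a))^{-1}S_\lambda$, iterate it, multiply in the semidirect product, and then recognize the resulting element as the member of $S_\lambda$ sitting over $f_\lambda(a)(b)+a$. If anything, your version is slightly more careful than the paper's, since you explicitly justify (via condition (a)) why the second coordinate of the product must equal $f_\lambda(f_\lambda(a)(b)+a)$ before applying the definition of $\phi$ once more.
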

\begin{proof}
{\rm Because of the condition (b) of the Theorem 4.1, we obtain \\ $(a,f_{\lambda}(a))^{-1}S_{\lambda}=S_{\phi(\lambda,a)}$.
Therefore
\begin{eqnarray*}
S_{\phi(\phi(\lambda,a),b)} &=& (b,f_{\lambda}(b))^{-1}S_{\phi(\lambda,a)}\\
                            &=& (b,f_{\lambda}(b))^{-1} \{(a,f_{\lambda}(a))^{-1} S_{\lambda}\} \\
                            &=& \{(b,f_{\lambda}(b))^{-1} (a,f_{\lambda}(a))^{-1} \} S_{\lambda}\\
                            &=& (f_{\lambda}(a)(b)+a, f_{\lambda}(a)f_{\lambda}(b))^{-1} S_{\lambda}\\
                            &=& S_{\phi(\lambda,f_{\lambda}(a)(b)+a )}.
\end{eqnarray*} \qed}
\end{proof}

\section{Graphs of dynamical braces and properties.}
Let $(A,+)$ be an abelian group, $H$ a non-empty set and $\{S_{\lambda}\}_{\lambda\in H}$ a family of subsets of $A\rtimes Aut(A)$ which satisfies the conditions (a) and (b) of Theorem 4.1. Here $S_{\lambda}$ is defined by $S_{\lambda}=\{ (a,f_{\lambda}(a)) |a\in A\}$. Then by the condition (b) we obtain a directed edge from $S_{\lambda}$ to $S_{\mu}$, where $\mu$ is $\phi(\lambda,a)$ by definition of $\phi$.

\[\xymatrix{
S_{\lambda} \ar[rr]^{a} & & S_{\mu} \\
}\]

Namely, this graph consists of
\[
    V(A)=\{S_{\lambda}| \lambda\in H \} \quad\mbox{(vertex set)}, 
\]
\[
    E(A)= \{(S_{\lambda},S_{\phi(\lambda,a)} ) | \lambda\in H, a\in A\} \quad\mbox{(edge set)}.
\]
We call this graph associated with $(A,+,\{S_{\lambda}\}_{\lambda \in H})$ a graph of d-brace.

As a consequence of this, 
$S_{\lambda}$ corresponds to multiplication $\cdot_{\lambda}$ (i.e., dynamical parameters correspond to vertices of graph), 
map $\phi$ means a connection of edges, and $\# (A)$ is a degree of graph. This graph has following properties.

\begin{Prop}
{\rm 
\begin{enumerate}
\item Each vertex $S_{\phi(\lambda,a)}$ $(\lambda\in H,a\in A )$ has a loop. Namely $(S_{\phi(\lambda,a)},S_{\phi(\lambda,a)}) \in E(A)$ for all $(\lambda,a) \in H\times A$. 
\item The edge $(S_{\phi(\lambda,a)},S_{\phi(\phi(\lambda,a),b)} ) \in E(A)$  has an inverse edge.
Namely $(S_{\phi(\phi(\lambda,a),b)} ,S_{\phi(\lambda,a)}) \in E(A)$ for all $(\lambda,a,b) \in H\times A\times A$.
\item For the edge $(S_{\phi(\lambda,a)},S_{\phi(\lambda,a)}) \in E(A)$. A corresponding multiplication $\cdot_{\phi(\lambda,a)}$ has zero-symmetry. Hence all d-braces include a zero-symmetry restricted d-brace. 
\item Two isomorphic d-braces give the same underlying graph. 
\end{enumerate}}
\end{Prop}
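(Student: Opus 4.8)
The plan is to verify each of the four statements directly from the graph definitions, using Theorem 4.1 and Proposition 4.1, together with the two structural identities that govern $\phi$: the condition (b) of Theorem 4.1 (equivalently $(a,f_\lambda(a))^{-1}S_\lambda = S_{\phi(\lambda,a)}$) and the weight-zero-type identity of Proposition 4.1, $S_{\phi(\phi(\lambda,a),b)} = S_{\phi(\lambda, f_\lambda(a)(b)+a)}$.

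First, for (1), I would show that every vertex of the form $S_{\phi(\lambda,a)}$ has a loop. By condition (b), setting $\mu=\phi(\lambda,a)$, we have $(a,f_\lambda(a))^{-1}S_\lambda = S_\mu$, i.e. $S_\mu$ itself equals $(a,f_\lambda(a))^{-1}S_\lambda$. Now I apply condition (a) to the element $0\in A$: there is a unique $f\in Aut(A)$ with $(0,f)\in S_\mu$; since $(0,id)$ arises as $(a,f_\lambda(a))^{-1}(a,f_\lambda(a))$ and $(a,f_\lambda(a))\in S_\lambda$, we get $(0,id)\in S_\mu$, so $f_\mu(0)=id$. Then $(0,f_\mu(0))^{-1}S_\mu = S_\mu$, which by definition of $\phi$ means $\phi(\mu,0)=\mu$ in the sense that $S_{\phi(\mu,0)} = S_\mu$; hence $(S_\mu,S_\mu)\in E(A)$. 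This is the cleanest route; the key observation is simply that $0$ always gives the identity automorphism in any $S_\mu$ that is a conjugate-translate of some $S_\lambda$.

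For (2), I want an inverse edge: given the edge $(S_{\phi(\lambda,a)}, S_{\phi(\phi(\lambda,a),b)})$, produce an edge back. Write $\mu = \phi(\lambda,a)$ and $\nu = \phi(\mu,b)$. The forward edge corresponds to $(b,f_\mu(b))^{-1}S_\mu = S_\nu$. I need an element $c\in A$ with $(c,f_\nu(c))^{-1}S_\nu = S_\mu$, equivalently $(c,f_\nu(c)) = (b,f_\mu(b))^{-1}$ as an element of $S_\nu$ — indeed $(b,f_\mu(b))^{-1}\in (b,f_\mu(b))^{-1}S_\mu = S_\nu$, and it has first coordinate $c := -f_\mu(b)^{-1}(b)\in A$, so by uniqueness in condition (a) it equals $(c,f_\nu(c))$. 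Then $(c,f_\nu(c))^{-1}S_\nu = (b,f_\mu(b))S_\nu = S_\mu$, giving $S_{\phi(\nu,c)} = S_\mu$, i.e. $(S_\nu,S_\mu)\in E(A)$. This is the step I expect to require the most care, because one must be precise about the group-theoretic bookkeeping in $A\rtimes Aut(A)$ and about what ``inverse edge'' means at the level of the index set $H$ versus the level of subsets $S_\lambda$.

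For (3), combine (1) with Definition 3.3: the loop at $S_{\phi(\lambda,a)}$ means $S_{\phi(\phi(\lambda,a),0)} = S_{\phi(\lambda,a)}$, and tracing through the correspondence of Theorem 4.1(2), the multiplication attached to $S_{\phi(\lambda,a)}$ satisfies $x\cdot_{\phi(\lambda,a)}0 = f_{\phi(\lambda,a)}(0)(x)-x = 0$ since $f_{\phi(\lambda,a)}(0)=id$ by the argument in (1); together with $0\cdot_\mu x = 0$ which always holds (Proposition 3.5(1)), $\cdot_{\phi(\lambda,a)}$ is zero-symmetric. Restricting $H$ to the image $\phi(H\times A)$ then yields a restricted d-brace (in the sense of Definition 3.3(2)) all of whose multiplications are zero-symmetric; one checks this subset is closed under $\phi$ because $\phi(\phi(\lambda,a),b)$ is again of the form $\phi(\cdot,\cdot)$. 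Finally, for (4), if two d-braces are isomorphic via $(F,p)$ as in Definition 3.3(3), then $p:H\to H'$ is a bijection carrying $S_\lambda$ to $S'_{p(\lambda)}$ compatibly with $\phi$ and $\phi'$ (condition (c) of the isomorphism), so $p$ induces a graph isomorphism $V(A)\to V(A')$, $E(A)\to E(A')$; hence the underlying graphs coincide. The main obstacle overall is step (2); the rest is routine unwinding of the definitions.
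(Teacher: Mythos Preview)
Your proposal is correct and follows essentially the same route as the paper. The only cosmetic difference is packaging: for (1) and (2) the paper invokes Proposition~4.1 (with $b=0$ for the loop, and with the element $f_{\phi(\lambda,a)}(b)^{-1}(-b)$ for the inverse edge), whereas you unwind the same facts directly in $A\rtimes Aut(A)$ by showing $(0,id)\in S_{\phi(\lambda,a)}$ and by inverting $(b,f_\mu(b))$; parts (3) and (4) match the paper's argument verbatim.
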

\begin{proof}
{\rm 1. By Proposition 4.1. 
\[
(S_{\phi(\lambda,a)},S_{\phi(\lambda,a)}) = (S_{\phi(\lambda,a)},S_{\phi(\phi(\lambda,a),0))}) \in E(A). 
\]
2. By definition $(S_{\phi(\phi(\lambda,a),b)} , S_{\phi( \phi(\phi(\lambda,a),b) , f_{\phi(\lambda,a)}(b)^{-1}(-b)}) \in E(A)  $, and 
\[
S_{\phi( \phi(\phi(\lambda,a),b) , f_{\phi(\lambda,a)}(b)^{-1}(-b))} =S_{\phi(\phi(\lambda,a) , 0)}= S_{\phi(\lambda,a)},
\]
 follows from Proposition 4.1. Therefore
 $(S_{\phi(\phi(\lambda,a),b)} , S_{\phi(\lambda,a)}) \in E(A)$.

3.  Follows from definition of $\cdot_{\lambda}$ and Proposition 3.5. For the latter, restrict a set of dynamical parameters to $Im\phi$. Elements of $Im\phi$ correspond to vertices with loop.

4. Let $(A,H,\phi;+_{A},\{ \cdot_{\lambda}\}_{\lambda\in H} )$ and $(B,I,\psi;+_{B},\{\cdot_{\mu}\}_{\mu\in I})$ be two isomorphic d-braces. By definition of isomorphisms. There are maps $F:A\rightarrow A^{'}$, $p:H\rightarrow H^{'}$ such that $F,p$ satisfies the conditions of definition 4.1. 
If $(\lambda,\phi(\lambda,a))\in E(A)$, then $(p(\lambda), p\phi(\lambda,a))=(p(\lambda), \psi(p(\lambda),F(a)) ) \in E(B)$.
Therefore we obtain a bijection between two graphs. \qed}
\end{proof}

Finally we give some examples of graphs and d-braces.
In below examples, the edge $\longleftrightarrow$ means an edge with their inverese edge and $\Longrightarrow$ means double edges (for loops we only use $\rightarrow$). When a graph is complicated, we omit labels of edges. 
 
\begin{Exa}
{\rm Let $A$ be an abelian group then $A$ itself is a regular subgroup of $A\rtimes Aut(A)$.
This regular subgroup corresponds to the trivial brace structure on $A$. (See Example 3.1).}
\end{Exa}

\begin{Exa}
{\rm Set $A=\{0,1,2 \}=\mathbb{Z}_{3}$, $Aut(A)=\{id_{A}, \tau \} , \tau:(0,1,2)\mapsto (0,2,1)$ and 
\[
A\rtimes Aut(A)=\{I=(0,id_{A}) , (0,\tau) , (1,id_{A}) , (1,\tau) , (2,id_{A}) , (2,\tau) \}.
\]
Then next families of subsets of $A\rtimes Aut(A)$ satisfy the conditions (a) and (b).
$(S_{\lambda_{1}},S_{\lambda_{2}},S_{\lambda_{3}},S_{\lambda_{4}})$,
$(S_{\lambda_{1}},S_{\lambda_{2}},S_{\lambda_{3}},S_{\lambda_{5}})$,
$(S_{\lambda_{1}},S_{\lambda_{2}},S_{\lambda_{3}},S_{\lambda_{6}})$.
Here 
\[
  S_{\lambda_{1}}:=\{I , (1,\tau) , (2,\tau) \} ,\quad S_{\lambda_{2}}:=\{I , (1,id_{A}) , (2,\tau) \} , 
\]
\[
  S_{\lambda_{3}}:=\{I , (1,\tau) , (2,id_{A}) \} ,\quad S_{\lambda_{4}}:=\{(0,\tau) , (1,id_{A}) , (2,id_{A}) \},
\]
\[
   S_{\lambda_{5}}:=\{(0,\tau) , (1,\tau) , (2,id_{A}) \} ,\quad S_{\lambda_{6}}:=\{(0,\tau) , (1,id_{A}) , (2,\tau) \}.
\]
In this case sets of dynamical parameters is $H=\{\lambda_{1}, \lambda_{2}, \lambda_{3}, \lambda_{i} \}$ ($i=4,5,6$).

Multiplications that correspond to $S_{\lambda_{1}},S_{\lambda_{2}},S_{\lambda_{3}},S_{\lambda_{4}},S_{\lambda_{5}},S_{\lambda_{6}}$ and graphs of $(S_{\lambda_{1}},S_{\lambda_{2}},S_{\lambda_{3}},S_{\lambda_{i}})$ is as follows (these three graphs give same graph).

\[\xymatrix{
 S_{\lambda_{i}} \ar[dd] \ar[rr] \ar[ddrr] && S_{\lambda_{1}} \ar@(ul,ur) & \\
 \\
S_{\lambda_{2}} \ar@(dl,ul) \ar@{<->}[rr] \ar@{<->}[rruu] & & S_{\lambda_{3}} \ar@{<->}[uu] \ar@(dr,ur) \\
}\]

\begin{center}
\begin{tabular}{c|cccc c|cccc c|ccc}
$\cdot_{\lambda_{1}}$ & 0 & 1 & 2 & $\quad\quad\quad$ & $\cdot_{\lambda_{2}}$ & 0 & 1 & 2 & $\quad\quad\quad$ &$\cdot_{\lambda_{3}}$ & 0 & 1 & 2  \\ \cline{1-4} \cline{6-9} \cline{11-14}
0 & 0 & 0 & 0 & $\quad\quad\quad$ & 0 & 0 & 0 & 0 & $\quad\quad\quad$ & 0 & 0 & 0 & 0 \\
1 & 0 & 1 & 1 & $\quad\quad\quad$ & 1 & 0 & 0 & 1 & $\quad\quad\quad$ & 1 & 0 & 1 & 0 \\
2 & 0 & 2 & 2 & $\quad\quad\quad$ & 2 & 0 & 0 & 2 & $\quad\quad\quad$ & 2 & 0 & 2 & 0 \\
\end{tabular}
\end{center}

\begin{center}
\begin{tabular}{c|cccc c|cccc c|ccc}
$\cdot_{\lambda_{4}}$ & 0 & 1 & 2 & $\quad\quad\quad$ & $\cdot_{\lambda_{5}}$ & 0 & 1 & 2 & $\quad\quad\quad$ & $\cdot_{\lambda_{6}}$ & 0 & 1 & 2 \\ \cline{1-4} \cline{6-9} \cline{11-14}
0 & 0 & 0 & 0 & $\quad\quad\quad$ & 0 & 0 & 0 & 0 & $\quad\quad\quad$ & 0 & 0 & 0 & 0 \\
1 & 1 & 0 & 0 & $\quad\quad\quad$ & 1 & 1 & 1 & 0 & $\quad\quad\quad$ & 1 & 1 & 0 & 1 \\
2 & 2 & 0 & 0 & $\quad\quad\quad$ & 2 & 2 & 2 & 0 & $\quad\quad\quad$ & 2 & 2 & 0 & 2 \\
\end{tabular}
\end{center}
Therefore d-braces corresponding to $(S_{\lambda_{1}},S_{\lambda_{2}},S_{\lambda_{3}},S_{\lambda_{4}})$,$(S_{\lambda_{1}},S_{\lambda_{2}},S_{\lambda_{3}},S_{\lambda_{5}})$ and $(S_{\lambda_{1}},S_{\lambda_{2}},S_{\lambda_{3}},S_{\lambda_{6}})$ are not isomorphic.
(Therefore the inverse of Proposition 5.1 (4) is not true.)

Moreover in this example, triplet $(S_{\lambda_{1}}, S_{\lambda_{2}}, S_{\lambda_{3}})$ again satisfies conditions (a) and (b).
From this we obtain a subgraph as follows.\\

\[\xymatrix{
& S_{\lambda_{1}} \ar@(ul,ur)^{0} & \\
 \\
S_{\lambda_{2}} \ar@(dl,ul)^{0} \ar@{<->}[rr]_{1}^{2} \ar@{<->}[ruu]_{2}^{2} & & S_{\lambda_{3}} \ar@{<->}[luu]_{1}^{1} \ar@(dr,ur)_{0} \\
}\]

It means that the d-brace corresponds to  $(S_{\lambda_{1}}, S_{\lambda_{2}}, S_{\lambda_{3}})$ is a restricted d-brace of d-braces that corresponds to $(S_{\lambda_{1}}, S_{\lambda_{2}}, S_{\lambda_{3}}, S_{\lambda_{i}})$, $i=4,5,6$.}

\end{Exa}

\begin{Exa}
{\rm Set $A=\{(0,0) , (0,1) , (1,0) , (1,1) \}=\mathbb{Z}_{2}\times \mathbb{Z}_{2}$. Let $\tau$ and $\pi$ be automorphisms of $A$ defined by $\tau:((0,1) , (1,0) , (1,1) )\mapsto ((0,1) , (1,1) , (1,0))$, $\pi:((0,1) , (1,0) , (1,1))\mapsto ((1,0) , (0,1) , (1,1))$. Then
\[
S_{\lambda_{1}}=\{ I=((0,0), id_{A}) , ((0,1),\tau) , ((1,0),\tau) , ((1,1), id_{A}) \} 
\]
\[
S_{\lambda_{2}}=\{ I , ((0,1),\tau) , ((1,0),id_{A}) , ((1,1),\tau)  \}
\]
satisfy the conditions (a), (b).
And a set  
\[
S_{\lambda_{3}}=\{ I , ((0,1),\pi) , ((1,0),\pi) , ((1,1),id_{A}) \}.
\]
satisfies the conditions (a),(b). The graphs of $(S_{\lambda_{1}}, S_{\lambda_{2}})$ and $(S_{\lambda_{3}})$ expressed as follows
(because of $S_{\lambda_{3}} \not\simeq A$, $S_{\lambda_{3}}$ corresponds to non-trivial brace).

\[\xymatrix{
S_{\lambda_{1}} \ar@(dl,ul)^{(0,0),(1,1)} \ar@{<=>}[rr]_{(0,1),(1,0)}^{(0,1),(1,1)} & & S_{\lambda_{2}} \ar@(dr,ur)_{(0,0),(1,0)} \\
}\]

\[\xymatrix{
& S_{\lambda_{3}} \ar@(ul,ur)^{(0,0),(0,1),(1,0),(1,1)} & \\
}\]

Multiplications $\cdot_{\lambda_{1}}$, $\cdot_{\lambda_{2}}$, $\cdot_{\lambda_{3}}$ that correspond to $S_{\lambda_{1}}$, $S_{\lambda_{2}}$, $S_{\lambda_{3}}$ is as follows.

\begin{center}
\begin{tabular}{c|ccccc c|cccc}
$\cdot_{\lambda_{1}}$ & (0,0) & (0,1) & (1,0) & (1,1) &$\quad \ $ & $\cdot_{\lambda_{2}}$ & (0,0) & (0,1) & (1,0) & (1,1) \\ \cline{1-5} \cline{7-11}
(0,0) & (0,0) & (0,0) & (0,0) & (0,0) &$\quad \ $ & (0,0) & (0,0) & (0,0) & (0,0) & (0,0) \\
(0,1) & (0,0) & (0,0) & (0,0) & (0,0) &$\quad \ $ & (0,1) & (0,0) & (0,0) & (0,0) & (0,0) \\
(1,0) & (0,0) & (0,1) & (0,1) & (0,0) &$\quad \ $ & (1,0) & (0,0) & (0,1) & (0,0) & (0,1) \\
(1,1) & (0,0) & (0,1) & (0,1) & (0,0) &$\quad \ $ & (1,1) & (0,0) & (0,1) & (0,0) & (0,1) \\
\end{tabular}
\end{center}

\begin{center}
\begin{tabular}{c|cccc}
$\cdot_{\lambda_{3}}$ & (0,0) & (0,1) & (1,0) & (1,1) \\ \cline{1-5}
(0,0) & (0,0) & (0,0) & (0,0) & (0,0) \\
(0,1) & (0,0) & (1,1) & (1,1) & (0,0) \\
(1,0) & (0,0) & (1,1) & (1,1) & (0,0) \\
(1,1) & (0,0) & (0,0) & (0,0) & (0,0) \\
\end{tabular}
\end{center}

Other (more complicated) example.
Let $\tau$ be above automorphism and $\sigma$ an automorphism of $A$ defined by $((0,1),(1,0),(1,1)) \mapsto ((1,0),(1,1),(0,1))$ then
\[
S_{\mu_{1}}=\{I=((0,0),id_{A}), ((0,1),\tau), ((1,0),\sigma), ((1,1),id_{A})\},
\]
\[
S_{\mu_{2}}=\{I, ((0,1),\tau), ((1,0),\tau\sigma), ((1,1),\tau) \} ,
\]
\[
S_{\mu_{3}}=\{I, ((0,1),\sigma^{-1}), ((1,0),\tau\sigma), ((1,1),\sigma^{-1})\},
\]
\[
S_{\mu_{4}}=\{I, ((0,1),\sigma), ((1,0),\tau), ((1,1),id_{A}) \}.
\]
satisfy the conditions (a) and (b). 
The graph of this pair and correspondence multiplication is as follows.

\[\xymatrix{
 S_{\mu_{1}} \ar@(l,u) \ar@{<->}[dd] \ar@{<->}[rr] \ar@{<->}[ddrr] && S_{\mu_{2}} \ar@(r,u) & \\
 \\
S_{\mu_{3}} \ar@(l,d) \ar@{<->}[rr] \ar@{<->}[rruu] & & S_{\mu_{4}} \ar@{<->}[uu] \ar@(r,d) \\
}\]

\begin{center}
\begin{tabular}{c|ccccc c|cccc}
$\cdot_{\mu_{1}}$ & (0,0) & (0,1) & (1,0) & (1,1) &$\quad \ $ & $\cdot_{\mu_{2}}$ & (0,0) & (0,1) & (1,0) & (1,1) \\ \cline{1-5} \cline{7-11}
(0,0) & (0,0) & (0,0) & (0,0) & (0,0) &$\quad \ $ & (0,0) & (0,0) & (0,0) & (0,0) & (0,0) \\
(0,1) & (0,0) & (0,0) & (1,1) & (0,0) &$\quad \ $ & (0,1) & (0,0) & (0,0) & (1,0) & (0,0) \\
(1,0) & (0,0) & (0,1) & (0,1) & (0,0) &$\quad \ $ & (1,0) & (0,0) & (0,1) & (0,0) & (0,1) \\
(1,1) & (0,0) & (0,1) & (1,0) & (0,0) &$\quad \ $ & (1,1) & (0,0) & (0,1) & (1,0) & (0,1) \\
\end{tabular}
\end{center}

\begin{center}
\begin{tabular}{c|ccccc c|cccc}
$\cdot_{\mu_{3}}$ & (0,0) & (0,1) & (1,0) & (1,1) &$\quad \ $ & $\cdot_{\mu_{4}}$ & (0,0) & (0,1) & (1,0) & (1,1) \\ \cline{1-5} \cline{7-11}
(0,0) & (0,0) & (0,0) & (0,0) & (0,0) &$\quad \ $ & (0,0) & (0,0) & (0,0) & (0,0) & (0,0) \\
(0,1) & (0,0) & (1,0) & (1,0) & (1,0) &$\quad \ $ & (0,1) & (0,0) & (1,1) & (0,0) & (0,0) \\
(1,0) & (0,0) & (1,1) & (0,0) & (1,1) &$\quad \ $ & (1,0) & (0,0) & (0,1) & (0,1) & (0,0) \\
(1,1) & (0,0) & (0,1) & (1,0) & (0,1) &$\quad \ $ & (1,1) & (0,0) & (1,0) & (1,0) & (0,0) \\
\end{tabular}
\end{center} }

\end{Exa}

\section*{Acknowledgment}
The author wants to thank Professor Youichi Shibukawa whose insightful comments were invaluable for this study.
He also thanks Professor Kimio Ueno whose enormous support, and members of Uenofs laboratory for useful advice and discussions.




\bibliographystyle{model3a-num-names}
\bibliography{<your-bib-database>}







\end{document}